\documentclass[letterpaper,11pt]{article}
\RequirePackage{amsmath, amsthm, amssymb, mathtools}
\RequirePackage{comment}
\RequirePackage{graphicx}
\RequirePackage{fancyvrb}
\RequirePackage[usenames,dvipsnames,svgnames,table]{xcolor}
    \definecolor{linkcolor}{RGB}{0,0,128}
\RequirePackage[pdfpagelabels,plainpages=false,hypertexnames=true,colorlinks=true,linkcolor=linkcolor,anchorcolor=linkcolor,citecolor=linkcolor,filecolor=linkcolor,menucolor=linkcolor,runcolor=linkcolor,urlcolor=linkcolor,pdfborder={0 0 0}]{hyperref}
\RequirePackage[scale=0.95]{sourcecodepro}
\RequirePackage{mathpazo}

\RequirePackage{dsfont}
\RequirePackage{parskip}
\RequirePackage{tikz}
\usetikzlibrary{positioning}

\RequirePackage[backend=biber,backref=true,style=numeric,giveninits=true,natbib=true,maxbibnames=12]{biblatex}
\addbibresource{nawaf.bib}
\addbibresource{tore.bib} % added by Tore

\usepackage{orcidlink}
\usepackage{enumerate,enumitem,romanbar}

\RequirePackage{microtype}
\usepackage{newfloat}
\DeclareFloatingEnvironment[
    fileext=los,
    listname={List of Algorithms},
    name=Algorithm,
    placement=tbhp,
    within={},
]{algorithm}

\tolerance=800
\hbadness=800

% \setlength{\paperheight}{9in}
% \setlength{\paperwidth}{6in}
% \pdfpagewidth=\paperwidth
% \pdfpageheight=\paperheight

\setlength{\textwidth}{6.5in}
\setlength{\textheight}{9in}
\setlength{\oddsidemargin}{0in}
\setlength{\evensidemargin}{0in}
\setlength{\topmargin}{-0.5in}    

%\usepackage[utf8]{inputenc}

%\usepackage{mathrsfs,latexsym, enumerate, ,amsfonts,framed,enumerate,bbm,appendix,soul,pdftricks}
%\usepackage{romanbar}
%\usepackage{MnSymbol} 
%\usepackage{algorithmic}
%\usepackage{multirow,mathtools,latexsym, mathrsfs,framed,listings,mleftright,enumitem}

%\usepackage{romanbar}
%\RequirePackage[colorlinks,citecolor=blue,urlcolor=blue]{hyperref}
%\RequirePackage{graphicx}

%\arxiv{2310.07399}

%\startlocaldefs

\theoremstyle{plain}% default
\newtheorem{prototheorem}{Theorem}

\newtheorem{theorem}[prototheorem]{Theorem}
\newtheorem{lemma}[prototheorem]{Lemma}

\newtheorem{example}[prototheorem]{Example}

\theoremstyle{remark}

\newtheorem{remark}{Remark}
\newtheorem{assumption}{Assumption}

\DeclarePairedDelimiter\floor{\lfloor}{\rfloor}

\newcommand{\norm}[1]{\left\lVert#1\right\rVert}
\newcommand{\R}{\mathbb{R}}

 %matrix norm
\newcommand{\Z}{\ensuremath \mathbb{Z}}

\newcommand{\opnorm}[1]{\left\| #1 \right\|_{\mathsf{op}}} 
\newcommand{\wnorm}[1]{\left\| #1 \right\|_{\mathsf{w}}} 
\newcommand{\wonenorm}[1]{\left\| #1 \right\|_{\mathsf{w},1}}

\newcommand{\rn}[1]{\Romanbar{#1}}

%\endlocaldefs

%\makeatletter
%\let\c@author\relax
%\makeatother

\title{Randomized Runge-Kutta-Nystr\"{o}m Methods for Unadjusted Hamiltonian and Kinetic Langevin Monte Carlo}
\author{Nawaf Bou-Rabee\thanks{Department of Mathematical Sciences, Rutgers University, \href{mailto:nawaf.bourabee@rutgers.edu}  {\texttt{nawaf.bourabee@rutgers.edu}} \orcidlink{0000-0001-9280-9808}}
\and
Tore Selland Kleppe\thanks{Department of Mathematics and Physics, University of Stavanger, 
\href{mailto:mtore.kleppe@uis.no}{\texttt{tore.kleppe@uis.no}} \orcidlink{0000-0001-8469-908X}}
}

%\DeclareHookRule{begindocument}{imsart}{after}{biblatex}
\begin{document}

\maketitle
\begin{abstract}
\noindent
We introduce $5/2$- and $7/2$-order $L^2$-accurate randomized Runge-Kutta-Nystr\"{o}m methods, tailored for approximating Hamiltonian flows within non-reversible Markov chain Monte Carlo samplers, such as unadjusted Hamiltonian Monte Carlo and unadjusted kinetic Langevin Monte Carlo. We establish quantitative $5/2$-order $L^2$-accuracy upper bounds under gradient and Hessian Lipschitz assumptions on the potential energy function. The numerical experiments demonstrate the superior efficiency of the proposed unadjusted samplers on a variety of well-behaved, high-dimensional target distributions.
\end{abstract}

\begin{comment}
\begin{keyword}[class=MSC]
\kwd[Primary ]{60J05}
\kwd[; Secondary ]{65C05}
\kwd{65L05}
\kwd{65P10}
\end{keyword}

\begin{keyword}
\kwd{Markov chain Monte Carlo}
\kwd{Hamiltonian Monte Carlo}
\kwd{Kinetic Langevin Diffusion}
\kwd{Randomized Algorithm}
\end{keyword}
\end{comment}

\section{Introduction}

Measure-preserving ODEs, SDEs, and PDMPs play a fundamental role in the construction and analysis of non-reversible Markov chain Monte Carlo (MCMC) kernels \cite{RoTw1996B,DiHoNe2000,MaStHi2002,ELi2008,EbMa2019,Deligiannidis2021,kleppe2022,BoEb2022}.  Among time integrators for the corresponding flows, randomized time integrators have stood out in terms of complexity  \cite{shen2019randomized,ErgodicityRMMHYB,Cao_2021_IBC,BouRabeeMarsden2022,BouRabeeSchuh2023B,BouRabeeOberdoerster2024}.   The key idea behind these randomized time integrators is to replace the deterministic quadrature rules typically used in each time integration step with a Monte Carlo integration rule, akin to stochastic gradient descent in machine learning \cite{bottou2012stochastic}, which typically uses only a single sample per integration step. This approach not only reduces the regularity requirements on the coefficients of the underlying ODE, SDE or PDMP, but also significantly improves the asymptotic bias properties of the resulting Markov kernel, leading to reduced computational complexity \cite{BouRabeeMarsden2022,BouRabeeSchuh2023B}.  

\smallskip 

Randomized time integrators were first developed in the early 1990s by Stengle for Carath\'{e}odory differential equations (i.e., ODEs with vector fields that are  Lipschitz continuous in space but only measurable in time). Stengle proposed a family of randomized versions of the deterministic Runge-Kutta method \cite{stengle1990numerical}, along with a  convergence analysis of a representative method from this family  \cite{stengle1995error}.  This analysis was later refined and generalized in subsequent works  \cite{jentzen2009random,kruse2017error,bochacik2021randomized}. The general finding of these works is that in low regularity settings, randomized numerical methods offer better complexity than their deterministic counterparts.  Randomized versions of the Euler-Maruyama and Milstein schemes have also been proposed and studied for SDEs with low-regularity drift coefficients \cite{kruse2017randomized,bencheikh2022convergence}.

\smallskip 

To our knowledge, there has been limited work on developing randomized versions of Runge-Kutta-Nystr\"{o}m (RKN) schemes. RKN schemes are a specialized class of numerical methods designed to solve second-order differential equations, particularly those of the form  $\ddot{q}_t = \mathbf{F}(q_t, t)$ \cite{hairer1975theory,HaLuWa2010}.  The term \emph{Runge-Kutta-Nystr\"{o}m} originates from the contributions of Carl Runge and Wilhelm Kutta, who developed Runge-Kutta methods for general ODEs, and Ernst Nystr\"{o}m, who extended these methods to second-order systems. Unlike standard Runge-Kutta methods, which are tailored for first-order systems, RKN schemes take advantage of the second-order structure, where the position evolves with constant velocity to leading order.  This makes it easier to discretize the position variable, and by treating the position and velocity variables separately, RKN schemes can efficiently approximate second-order systems.  This leads to greater accuracy and efficiency in applications such as molecular dynamics, celestial mechanics, and Hamiltonian Monte Carlo.   A widely used example of an RKN scheme is the Verlet (or leapfrog) integrator \cite{HaLuWa2010,LeRe2004,BoSaActaN2018}.

\smallskip

The current state-of-the-art for randomized time integration of second-order measure-preserving processes are 3/2-order $L^2$-accurate integrators, under the assumption of Lipschitz regularity for the underlying drift fields \cite{ErgodicityRMMHYB,Cao_2021_IBC,BouRabeeMarsden2022,BouRabeeSchuh2023B}. This naturally raises the question of whether higher-order randomized time integrators can be developed for such processes, and what their asymptotic bias properties might be.  The goal of this paper is to demonstrate that higher-order randomized RKN methods can be constructed and that they can offer improved properties for the corresponding unadjusted  kernels.  Specifically, we introduce 5/2- and 7/2-order randomized Runge-Kutta-Nystr\"{o}m (rRKN) schemes tailored for Hamiltonian flows.  We provide a thorough investigation of both the numerical performance and theoretical properties of these higher-order methods, focusing on their application to unadjusted Hamiltonian and kinetic Langevin Monte Carlo.

\smallskip 
%The main numerical findings of the work are ... 
Another key contribution of this work is the numerical demonstration that the 5/2- and 7/2-order rRKN methods, when applied to unadjusted Hamiltonian and  kinetic Langevin Monte Carlo, can significantly reduce computational cost while maintaining the same level of tolerated bias. These higher-order rRKN methods outperform both the widely used Verlet integrator \cite{BoSaActaN2018} and  the more recently proposed stratified Monte Carlo integrator of  \cite{BouRabeeMarsden2022}, particularly when the target distribution is well-behaved. In scenarios where the underlying potential is gradient and Hessian Lipschitz continuous, and the Hessian matrix is well-conditioned, the high-order accuracy of rRKN schemes translates directly into fewer transition steps, which leads to lower computational cost.  This efficiency makes the rRKN methods a compelling choice for sampling in such settings, offering a clear advantage over existing methods.

\smallskip 

The paper makes two main theoretical contributions: (i) a rigorous proof of quantitative $L^2$-accuracy bounds for the rRKN 2.5 scheme under only gradient and Hessian Lipschitz assumptions (Theorem~\ref{thm:rRKN_L2_accuracy}); and (ii) analogous results for an unadjusted kinetic Langevin chain (Theorem~\ref{thm:ukLa_L2_accuracy}).  A central aspect of the rRKN 2.5 scheme in \eqref{eq:rRKN2pt5} is the use of a triangular random variable in its update rule, which  is essential in achieving its improved $L^2$-accuracy compared to the Verlet integrator, provided the specified regularity conditions are met. 

\smallskip

These proofs build on the well-established \emph{Fundamental Theorem for $L^2$-Convergence of Strong Numerical Methods for SDEs}, which states that if $p_1 \ge p_2 + 1/2 $ and $p_2 \ge 1/2$ are the local order of mean and mean-square accuracy (respectively), then the global $L^2$-accuracy of the method is of order $p_2 - 1/2$ \cite[Theorem 1.1.1]{Milstein2021}.  For an rRKN integrator, the analogous heuristic is that if  the local bias  is of order $p$ and the local mean-squared error is of order $q$, with $p \ge q+1/2$, then the global $L^2$-error  scales as $q-1/2$.

%This result can be interpreted as a bias-variance tradeoff, as discussed in  \cite{HeinrichMilla} and reviewed in Section~\ref{sec:abstract}.  

\smallskip

Specifically, the proofs rely  on four main components: (a) Lipschitz regularity of the exact Hamiltonian flow (Lemma~\ref{lem:lipschitz_exact_flow}); (b) numerical stability of  the rRKN integrator (Lemmas~\ref{lem:apriori} or~\ref{lem:apriori_ukLa}); 
(c) one-step mean accuracy of the rRKN integrator relative to the exact Hamiltonian flow (Lemma~\ref{lem:rRKN_local_mean_error}); and (d) one-step mean-squared accuracy of the rRKN integrator relative to the exact Hamiltonian flow (Lemma~\ref{lem:rRKN_local_L2_error}). By adapting the proofs of (b)-(d), and under appropriate regularity conditions, the approach presented in this paper can,  in principle, be extended to prove quantitative $L^2$-accuracy bounds for arbitrary-order rRKN integrators.  

\smallskip 

The paper is organized as follows.  The rRKN 2.5/3.5  schemes are introduced in Section \ref{sec:algorithms}, and as applications, unadjusted Hamltonian and kinetic Langevin Monte Carlo chains involving these randomized time integrators are described in Section \ref{sec:uHMC} and Section \ref{sec:ukLa}.  The main numerical findings, demonstrating the superior efficiency of the new schemes compared to the Verlet integrator for representative target distributions, are detailed in Section \ref{sec:numerics}.  The essential observation is that the new schemes are consistently more efficient than Verlet.  The main theoretical results are given in Section~\ref{sec:theory}. %A general theoretical framework for quantifying the $L^2$-accuracy of randomized integration schemes and their corresponding chains is given in \S\ref{sec:theory}.

\section*{Acknowledgements}

We would like to express our sincere gratitude to the referees for their careful review, thoughtful feedback, and insightful comments.   N.B.~thanks Milo Marsden for useful discussions.

%and Milo Marsden for his extensive help with the general framework in Section~\ref{sec:abstract} and the proof of Theorem~\ref{thm:abs}.

N.~Bou-Rabee has been partially supported by NSF Grant No.~DMS-2111224.

\section{The Randomized Algorithms} \label{sec:algorithms}

  This section presents randomized versions of the Runge-Kutta-Nystr\"{o}m method for approximating Hamiltonian flows and considers applications of these schemes to (1) unadjusted Hamiltonian Monte Carlo \cite{BouRabeeSchuh2023,BouRabeeEberle2023,monmarche2022hmc} and (2) unadjusted kinetic Langevin chains \cite{cheng2018underdamped,dalalyan2020sampling, monmarche2021high,monmarche2022hmc}. 
For a unified and comprehensive treatment of unadjusted MCMC, see \cite{DurmusEberle2021}.    For background on Runge-Kutta-Nystr\"{o}m and related schemes for Hamiltonian problems see \cite{sanz1992symplectic,SaCa1994,LeRe2004,HaLuWa2010}.

\subsection{Randomized Runge-Kutta-Nystr\"{o}m 2.5 \& 3.5}

Let $U: \mathbb{R}^d \to \mathbb{R}$ be a continuously differentiable potential energy and $\mathbf{F} \equiv -\nabla U$ be the corresponding force.  
  For any $t>0$, denote by  $\varphi_t(x,v) \, := \, (q_t(x,v),v_t(x,v))$ the \emph{exact} Hamiltonian flow which satisfies
\begin{equation} \label{exact}
\frac{d}{dt} {q}_t  \,  =\,     {v}_t \;, \quad  \frac{d}{dt} {v}_t \,  =\, \mathbf{F}({q}_t) \;, \quad (q_0, v_0) \ = \  (x,v) \ \in \ \mathbb{R}^{2d} \;. 
\end{equation}
To approximate the flow of \eqref{exact}, we  use  \emph{randomized Runge-Kutta-Nystr\"{o}m} (rRKN) schemes, which are a generalization of the 3/2-order scheme in \cite{BouRabeeMarsden2022} to higher-order accuracy under more restrictive regularity assumptions on $U$.

\medskip

Let $h>0$ be a time step size and define the evenly spaced time grid $\{ t_k := k h \}_{k \in \mathbb{N}_0}$.     Introduce the sequence of random variables \[ ( \mathcal{U}_{i} )_{i \in \mathbb{N}_0} \overset{i.i.d.}{\sim} \text{Triangular}(0,1) \] where $\text{Triangular}(0,1)$ denotes the triangular distribution with mode at $1$, i.e., the continuous probability distribution whose support is confined to $[0,1]$ and with probabiliy density function given by $f(\mathsf{x}) = 2 \mathsf{x}$ for all $\mathsf{x} \in [0,1]$.   For any $i \in \mathbb{N}_0$, a step of the \emph{5/2-order rRKN} time integrator from $t_i$ to $t_{i+1}$ is given by  \begin{equation} \label{eq:rRKN2pt5}
\begin{aligned}
	\Tilde{Q}_{t_{i+1}}  \, &= \, \Tilde{Q}_{t_i} + h \Tilde{V}_{t_i} + \frac{h^2}{2}  \Tilde{\mathbf{F}}_{t_i}^- + \frac{h^2}{6 \mathcal{U}_i} \left(  \Tilde{\mathbf{F}}_{t_i}^+ - \Tilde{\mathbf{F}}_{t_i}^- \right)  \\ \Tilde{V}_{t_{i+1}} \ &= \  \Tilde{V}_{t_i} + h \tilde{\mathbf{F} }_{t_i}^-  + \frac{h}{2 \mathcal{U}_i} \left(  \Tilde{\mathbf{F}}_{t_i}^+ - \Tilde{\mathbf{F}}_{t_i}^- \right)   \;,
 \end{aligned}
\end{equation}
where $(\tilde{Q}_0, \tilde{V}_0) = (x,v) \in \mathbb{R}^{2d}$, \[ \Tilde{\mathbf{F}}_{t_i}^-  = \mathbf{F} (\Tilde{Q}_{t_i})  \quad \text{and} \quad \Tilde{\mathbf{F}}_{t_i}^+  = \mathbf{F} (\Tilde{Q}_{t_i} + (\mathcal{U}_i h)  \tilde{V}_{t_i} + (1/2) (\mathcal{U}_i h)^2 \Tilde{\mathbf{F}}_{t_i}^- ) \;.
\]
Equivalently, this discrete-time evolution can be described in terms of a flow map as follows
\begin{align}
 \label{rRKN_flow}
& (\tilde{Q}_{t_{i+1}}, \tilde{V}_{t_{i+1}}) \ = \ \Theta_h(\mathcal{U}_{i})(\tilde{Q}_{t_i}, \tilde{V}_{t_i})
\;,  \quad  (\tilde{Q}_0, \tilde{V}_0) \ = \ (x,v)  \ \in \  \mathbb{R}^{2d} \;, \quad \text{where} \\
& \Theta_h(\mathsf{u})(x,v) \ := \ \Big(x+ h v + \frac{h^2}{2} \mathbf{F}^-+ \frac{h^2}{6 \mathsf{u}} (\mathbf{F}^+ - \mathbf{F}^-) , 
v + h \mathbf{F}^- + \frac{h}{2 \mathsf{u}} ( \mathbf{F}^+- \mathbf{F}^- )  \Big)  \;, \\ & \text{and} \quad \mathsf{u} \ \in \ [0,1] \;. \nonumber
 \end{align}
Here, we have defined
$\mathbf{F}^- \, := \, \mathbf{F}(x)$ and $\mathbf{F}^+ \, := \, \mathbf{F}(x + (\mathsf{u} h) v + (1/2) (\mathsf{u} h)^2 \mathbf{F}^-)$.    This scheme is almost surely locally fourth order in position and third order in velocity, but the local bias is fourth order, which  heuristically implies $5/2$-order $L^2$-accuracy.  The general heuristic is that if  the local bias of an rRKN integrator is of order $p$ and the local mean-squared error is of order $q$, with $p \ge q+1/2$, then the global $L^2$-error will scale as $q-1/2$. Theorem~\ref{thm:rRKN_L2_accuracy} formalizes this heuristic into a rigorous result.

Remarkably, with only one additional force evaluation per integration step, we obtain a \emph{7/2-order rRKN} time integrator  
\begin{equation}
\label{eq:rRKN3pt5}
\begin{aligned}
    \tilde{Q}_{t_{i+1}} &= \tilde{Q}_{t_i} + h \tilde V_{t_i} + h^2 \left( \frac{1}{6} \tilde{\mathbf F}_{t_i}^- + \frac{1}{3} \tilde{\mathbf{F}}_{t_i}^*  \right) \;, \\
     \tilde V_{t_{i+1}} &= \tilde V_{t_{i}} +
     h \left( \frac{1}{6} \tilde{\mathbf F}_{t_i}^- + \frac{2}{3} \tilde{\mathbf{F}}_{t_i}^* + \frac{1}{6}  \tilde{\mathbf{F}}_{t_{i+1}}^- \right)  \;,
     \end{aligned}
\end{equation}
where $\tilde{\mathbf{F}}_{t_i}^* = 
    \mathbf{F} \left( \tilde{Q}_{t_i} + \frac{h}{2}\tilde V_{t_i} + h^2\left[ \frac{3}{32} \tilde{\mathbf F}_{t_i}^- + \frac{1}{32} \tilde{\mathbf{F}}_{t_i}^+  \right] \right)$.  This scheme is almost surely locally fifth order in position and fourth order in velocity, but the local bias is fifth order, which heuristically implies $7/2$-order $L^2$-accuracy.   These statements, of course, only  hold rigorously under  sufficient regularity conditions.   %For instance, if the local bias is $\mathcal{O}(h^4)$ and the mean-squared error is $\mathcal{O}(h^3)$, then the global $L^2$-error will scale as $\mathcal{O}(h^{5/2})$.   This back-of-the-envelope argument guided the construction of the 3.5-rRKN scheme, and at least heuristically, explains why it achieves $L^2$-accuracy of $7/2$.  
    
\medskip

From the viewpoint of MCMC, the exact Hamiltonian flow in \eqref{exact} has several nice properties (including volume and energy preservation), and as a consequence, it preserves  \begin{equation}
\label{eq:muk} \mu(dz)   \ \propto \ e^{-H(z)} \; dz \;, ~~\text{with} ~~  H(z)\ =\ \frac{1}{2} |v|^2\ +\ U(x) \;, ~~ z \ = \ (x,v) \ \in \  \mathbb{R}^{2d} \;.
\end{equation}
However, the flow also has a serious defect, namely that it admits infinitely many invariant measures. This defect motivates  velocity randomization in 
Hamiltonian Monte Carlo (HMC) \cite{Ne2011, BoSaActaN2018} and also considering Markov chains based on discretizations of the kinetic Langevin diffusion  \cite{cheng2018underdamped,cheng2018sharp,dalalyan2020sampling,shen2019randomized,monmarche2021high}.

\subsection{Application to an Unadjusted Hamiltonian Monte Carlo Chain}

\label{sec:uHMC}

%. A transition step of \emph{exact} HMC is given by  \begin{equation} \mathbf{X}^e(x) \ = \  q_T(x, \xi) \;, \quad \text{where} \quad \xi \sim \mathcal{N}(0,I_d) \;.   \label{xHMC} \end{equation} 

Let $\mathcal{N}(0,I_d)$ denote a standard multivariate normal distribution  with zero mean vector and $d \times d$ identity covariance matrix $I_d$.  
As a first application of rRKN, we consider an unadjusted HMC chain that uses rRKN for the underlying Hamiltonian flow.  For simplicity, we only consider full velocity randomization (i.e., the initial velocity per transition step is sampled anew from $\mathcal{N}(0,I_d)$), fixed duration $T>0$, and time step size $h > 0$ such that $T/h \in \mathbb{N}$.  For $x \in \mathbb{R}^d$, a transition step of \emph{unadjusted HMC} (uHMC) with the rRKN flow is then given by
\begin{equation}
X^u(x) \ = \  \tilde{Q}_T(x, \xi) \;, \quad \text{where} \quad \xi \sim \mathcal{N}(0,I_d) \;.
\label{uHMC}
\end{equation}
We stress that: (i) the random initial velocity $\xi$ is independent of state of the chain and of the random variables $( \mathcal{U}_{i} )_{i \in \mathbb{N}_0} \overset{i.i.d.}{\sim} \text{Triangular}(0,1)$ used in the rRKN flow; and, (ii) unadjusted HMC has an asymptotic bias.  
 
\subsection{Application to an Unadjusted Kinetic Langevin Chain}

\label{sec:ukLa}

Another application of rRKN  is to unadjusted chains based on the kinetic Langevin diffusion.  We consider kinetic Langevin diffusions $(X_t,V_t)_{t \ge 0}$  satisfying   \begin{equation}
\label{eq:kLd}
d X_t \, = \, V_t \, dt \;, ~~ d V_t \, = \,  \mathbf{F}(X_t) \,  dt - \gamma V_t \, dt + \sqrt{2 \gamma} \,  d B_t \;, ~~ (X_0, V_0) \, = \, (x,v)  \, \in \, \mathbb{R}^{2d} \;,
\end{equation}   
where $(B_t)_{t \ge 0}$ is a standard $d$-dimensional Brownian motion and $\gamma>0$ is a friction factor.  A key property of the kinetic Langevin diffusion is that under mild conditions on $U$ it has a \emph{unique} invariant probability measure given by $\mu$. Many discretizations of \eqref{eq:kLd} are possible \cite{ScLeStCaCa2006,LeRoSt2010, BoVa2010,Bo2014}. We focus on a  Lie-Trotter splitting   \cite{BuDoPa2007,Bo2014,BePiSaSt2011},  where the splitting components are given by Hamilton's equations for $H$ and Ornstein-Uhlenbeck (OU) equations in velocity.  As previously mentioned, the  former flow is $\varphi_t(x,v) \, = \, (q_t(x,v),v_t(x,v))$; and the latter flow  is determined by the OU-map:  \begin{align}
 \label{eq:OU}
    O_{h}(\mathsf{b}) (x,v) \ &= \ (x, e^{-\gamma h } v + (1- e^{-2 \gamma h } )^{\frac{1}{2}}  \, \mathsf{b}  ) \;,   \quad \mathsf{b} \ \in \ \mathbb{R}^d \;.
    \end{align}
Let $\{ \xi_i \}_{i \in \mathbb{N}_0} \overset{i.i.d.}{\sim} \mathcal{N}(0, I_d)$ and independent of $( \mathcal{U}_{i} )_{i \in \mathbb{N}_0} \overset{i.i.d.}{\sim} \text{Triangular}(0,1)$.  The OU flow map in \eqref{eq:OU} and the rRKN flow map in \eqref{rRKN_flow} are then combined as follows \begin{equation}
Z_{t_{i+1}}^u \ = \ O_{h}(\xi_i) \circ \Theta_{h}(\mathcal{U}_i)  (Z_{t_i}^u)  \;, \quad Z_0^u \ = \ (x,v) \ \in \ \mathbb{R}^{2 d} \;.
\label{OrRKN} 
\end{equation}
 The transition step of the corresponding \emph{unadjusted kinetic Langevin algorithm} (ukLa) is defined by \begin{equation}
Z^u(z) \ = \ O_{h}(\xi) \circ \Theta_{h}(\mathcal{U})  (z)  \;, ~~ \text{where} ~~  \xi \sim \mathcal{N}(0,I_d) \;,  ~~ \mathcal{U} \sim \operatorname{Triangular}(0,1)  \;, 
\label{ukLa}
\end{equation} where $z=(x,v) \in \mathbb{R}^{2d}$
and we stress that $\xi$ and $\mathcal{U}$ are mutually independent random variables.
In order to quantify the asymptotic bias of ukLa w.r.t.~$\mu$, it suffices to compare the ukLa chain with the chain generated by the \emph{exact splitting}
\begin{equation}
Z_{t_{i+1}}^e \ = \ O_{h}(\xi_i) \circ \varphi_{h} (Z_{t_i}^e)\;, \quad Z_0^e  \ = \ (x,v) \ \in \ \mathbb{R}^{2 d} \;, 
\label{OX} 
\end{equation}
where the exact Hamiltonian flow is used per integration step. Since each  sub-step in \eqref{OX} leaves the probability measure $\mu$ invariant,   the corresponding chain leaves $\mu$ invariant.

\section{Numerical Illustrations}

\label{sec:numerics}

To illustrate the proposed schemes, and their effect on resulting unadjusted MCMC algorithms, some limited numerical studies have been performed. Specifically, the contending schemes were Verlet, the stratified integrator of \cite{BouRabeeMarsden2022} (referred to as sMC), and the two rRKN schemes proposed here.

The former set of experiments consider analytically tractable example target distributions. Specifically, the standard multivariate Gaussian distributions with dimension 10, 100 and 1000 (referred to as G10, G100, G1000), along with three non-Gaussian bivariate distributions NG1, NG2, NG3, characterized by
NG1: $q_1 \sim N(0,1),\;q_2|q_1 \sim N(q_1^2/4,1)$,
NG2: $U(q) = (1+(q_{1}^{2}+q_{2}^{2})/8)^{5}$, and 
NG3: $U(q) = (1-q_1^2)^2 + (q_2-q_1)^2/2$.
Here, NG2 is the zero mean, unit scale matrix bivariate $t_8$ distribution, and NG3 is a bi-modal distribution.

Let $\pi(q)$ denote the density of the target distribution. As a measure of the performance of uHMC (\ref{uHMC}) and ukLa (\ref{ukLa}), the bias of 
\begin{equation}
    \frac{1}{n} \sum_{k=1}^n \log \pi(\mathcal{X}_k),
    \label{numeric_estimator}
\end{equation}
relative to $\int [\log \pi(q)] \pi(q) dq$ is used. For uHMC, $\mathcal{X}_k=X^u(\mathcal{X}_{k-1}), \; k \in \{ 1,\dots,n \}$, while for ukLa, $(\mathcal{X}_k,\mathcal{V}_k) = (Z^u)^{T/h}(\mathcal{X}_{k-1},\mathcal{V}_{k-1}),\; k \in \{ 1,\dots,n \}$, where in both cases $\mathcal{X}_0\sim \pi$ and in addition $\mathcal{V}_{0}\sim \mathcal N(0,I_d)$. 
Here, $X^u$ and $Z^u$ are defined by \eqref{uHMC} and \eqref{ukLa}, respectively.  
Throughout, $n=1000$ and $R$ independent estimators (\ref{numeric_estimator}) were used for estimating the bias associated with each combination of target distribution, scheme and selected numbers of time steps $T/h$. In all cases, $T=\pi/2$ is used for uHMC and $T=\pi,\;\gamma=2$ for ukLa.

\begin{figure} %[ht!]
\centering
\includegraphics[width=0.8\textwidth]{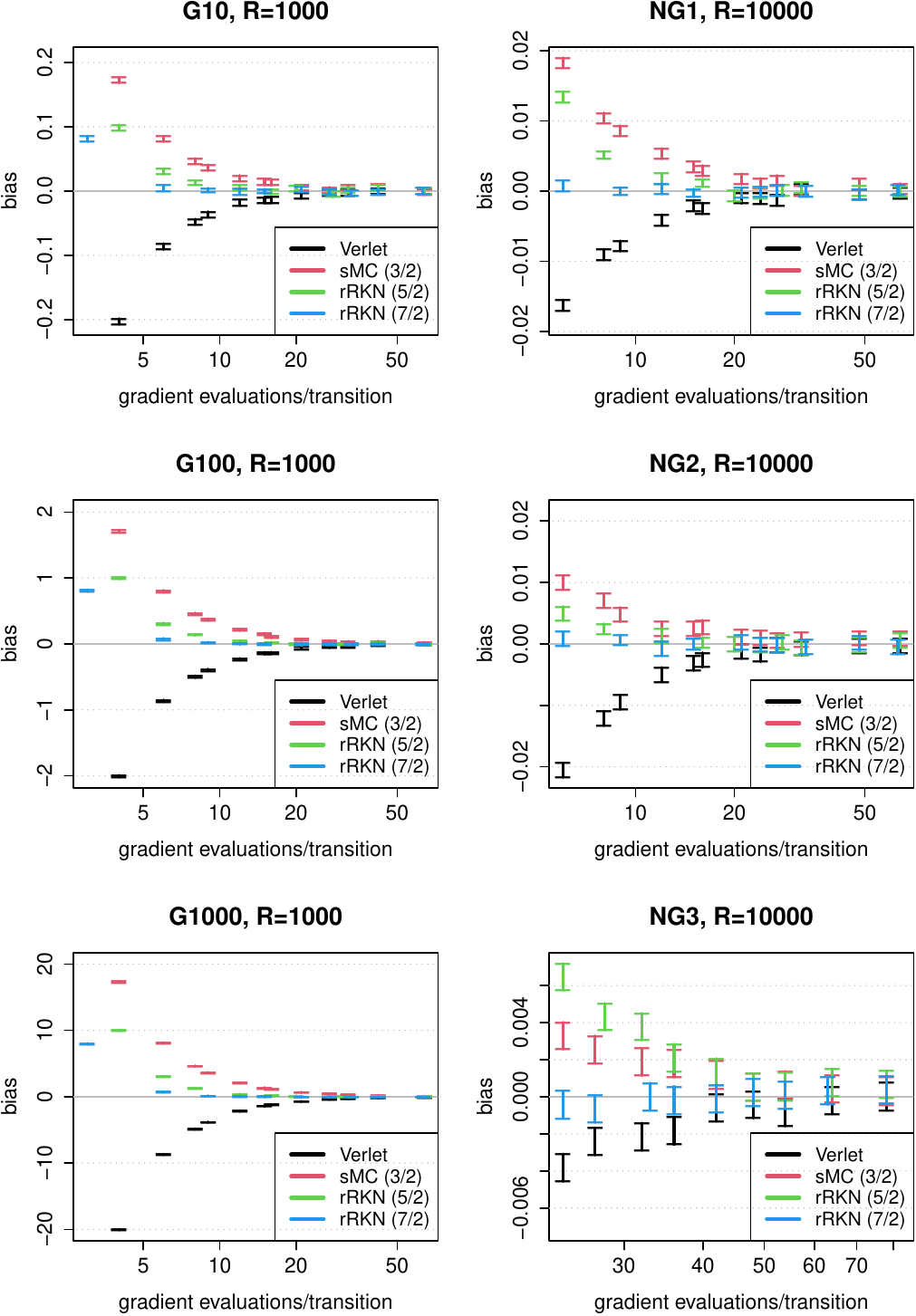}
\caption{95\% confidence intervals for bias of (\ref{numeric_estimator}) associated with uHMC sampling with different time integration schemes and selected number of time steps $T/h$. The different panels correspond to the different target distributions, and the number of simulation replica $R$ specific to each target distribution are indicated. The confidence intervals are plotted according to the number of gradient evaluations $T/h$ required per transition of the $\mathcal{X}_k$ chain on the horizontal axis (log-scale).}
\label{fig:uHMC_KL}
\end{figure}
\begin{figure} %[ht!]
\centering
\includegraphics[width=0.8\textwidth]{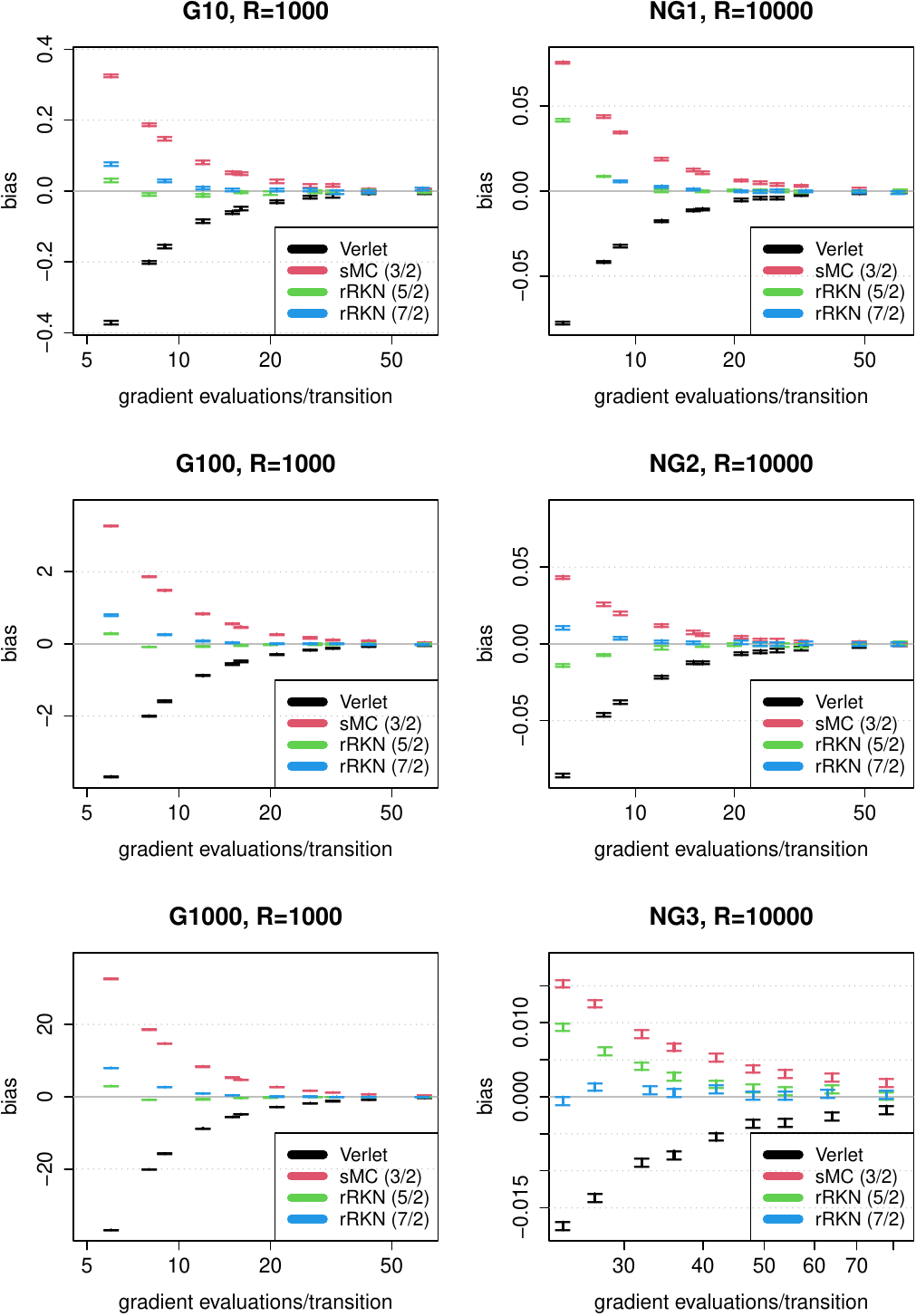}
\caption{
%Results for ukLa, and otherwise, the caption of Figure \ref{fig:uHMC_KL} applies also for this figure.
95\% confidence intervals for bias of (\ref{numeric_estimator}) associated with ukLa sampling with different time integration schemes and selected number of time steps $T/h$. The different panels correspond to the different target distributions, and the number of simulation replica $R$ specific to each target distribution are indicated. The confidence intervals are plotted according to the number of gradient evaluations $T/h$ required per transition of the $\mathcal{X}_k$ chain on the horizontal axis (log-scale).}
\label{fig:ukLa_KL}
\end{figure}
\begin{figure} %[ht!]
\centering
\includegraphics[width=0.99\textwidth]{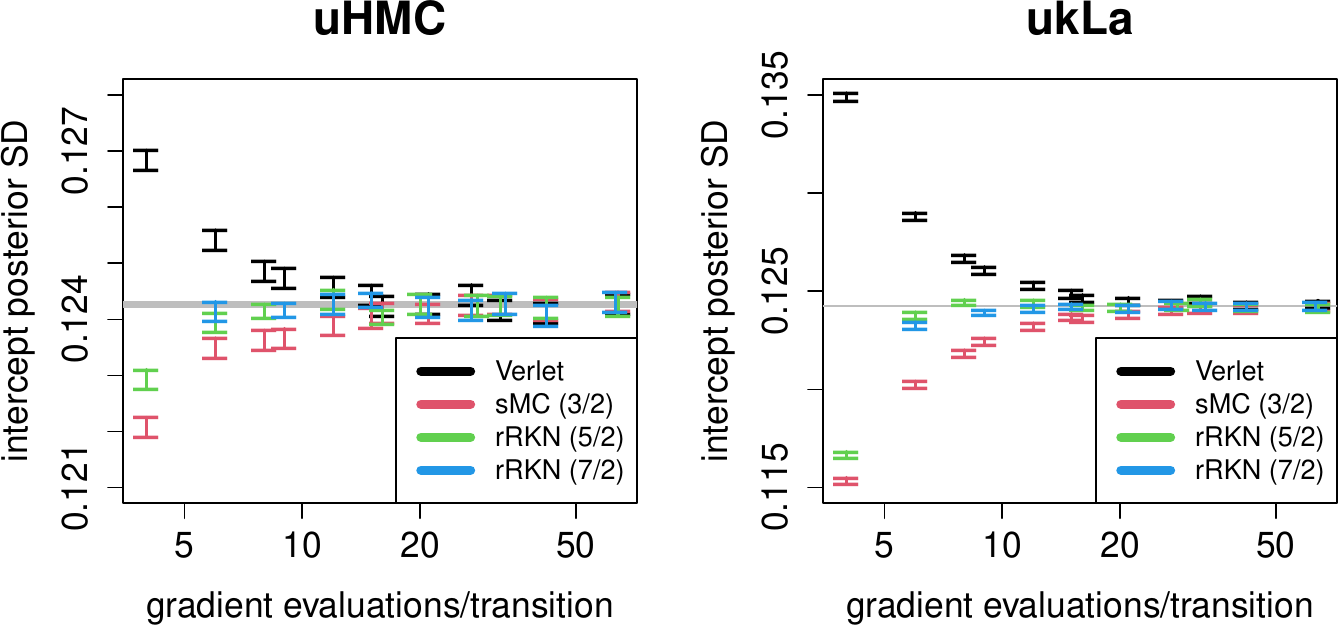}
\caption{Confidence intervals for the $n \rightarrow \infty$ limit of the posterior standard deviation of the intercept term of a logistic regression model applied to the Pima data set. All results are based on $R=1000$ simulation replica with $n=1000$, $T=\pi/2$ and 100 transitions of burn-in. The gray shaded region indicates a 95\% confidence interval calculated from the combined simulations with 64 (63 in the case of 3.5 scheme) gradient evaluations per transition.}
\label{fig:logistic_SD}
\end{figure}

Figure \ref{fig:uHMC_KL} provides 95\% confidence intervals for the bias of (\ref{numeric_estimator}) associated with uHMC for the different target distributions. The intervals are plotted w.r.t.~the number of gradient evaluations used per transition step  of the $\mathcal{X}_k$ chain (which in practical applications is approximately proportional to the overall computational cost of the algorithm). 

It is seen from Figure \ref{fig:uHMC_KL} that except for the challenging NG3 distribution, both proposed schemes produce large improvements in simulation efficiency (measured in bias per number of gradient evaluations) relative to sMC and Verlet. For NG3, the 3.5 scheme again is more efficient than the contending methods, whereas the 2.5 scheme performs rather poorly, at least for low accuracies.
Figure \ref{fig:ukLa_KL} presents similar results as in Figure \ref{fig:uHMC_KL}, but for ukLa. In this case, the proposed schemes appear to uniformly lead to substantial improvements in simulation efficiency.

In addition to the analytically tractable example target distributions above, the different schemes are also applied to sample the (non-Gaussian) posterior distribution of a logistic regression model applied to the Pima data set \cite[see e.g.][]{ripley96}. The data set consists of 532 observations of binary responses and 7 features (in addition to an intercept term). The features were standardized, and sampling (in $q$) was performed subject to an affine preconditioning step. More specifically, $\beta=\hat \beta + \hat \Sigma^{1/2} q$ where $\beta$ is the logistic regression parameter, $\hat \beta$ and $\hat \Sigma$ are the maximum likelihood estimators of $\beta$ and the inverse observed Fisher information of $\beta$ respectively. For all schemes and numbers of time steps, the initial position coordinate $\mathcal{X}_0$ was simulated using ukLA, namely $(\mathcal{X}_0,\tilde{\mathcal{V}})=(Z^u)^{1600}(z_1,z_2)$ with $h=\pi/16$ and rRKN (7/2) integration, where $z_1,z_2 \sim N(0,I_d)$. Note that due to the affine preconditioning step, the target distribution (in $q$-coordinates) is not too far from standard Gaussian, which informs the choice of distribution of $z_1$.

Figure \ref{fig:logistic_SD} provides confidence intervals for the $n\rightarrow \infty$ (but fixed $h$) limit of the posterior standard deviation of the intercept term. Again it is seen that the proposed schemes appear to converge faster toward the approximate asymptote indicated in gray in the Figure. As an example, for 6 gradient evaluations per transition, and with the ukLa sampler, Verlet and sMC are about 3-4 \% off of the approximate asymptote, whereas the corresponding figures are 0.3\% (rRKN 2.5) and 0.7\% (rRKN 3.5) respectively. 

From the numerical illustrations, it may be concluded that the rRKN schemes presented in this paper may provide large savings in computational cost for carrying out un-adjusted MCMC sampling.

\begin{remark}
In Figures~\ref{fig:uHMC_KL} and \ref{fig:ukLa_KL},  note that the asymptotic biases for uHMC and ukLa, based on the Verlet integrator, tend to be negative.  In the Gaussian cases, this can be explained analytically by considering the invariant measure for uHMC and ukLA in the position variable, given by:
$$ \mathcal{N}\left(0, \frac{1}{1 - h^2/4} I_d \right)  $$
Letting $\pi_h$ denote the corresponding density, the estimated bias converges to:
$$\int_{\mathbb{R}^d} (\pi_h(x) - \pi(x)) \log(\pi(x)) = - \frac{d}{2} \frac{h^2}{4 - h^2}.$$
Since $h < 2$ for numerical stability of Verlet, this shows that the asymptotic bias for these samplers is negative in this context. 
\end{remark}

\section{Theory} \label{sec:theory}

While we primarily focus on the 5/2-order $L^2$-accurate scheme in \eqref{eq:rRKN2pt5}, the proofs of the main results on $L^2$-accuracy of rRKN integration schemes rely on three key components:  local bias (Lemma~\ref{lem:rRKN_local_mean_error}),  local mean-squared error (Lemma~\ref{lem:rRKN_local_L2_error}) and numerical stability bounds (Lemma~\ref{lem:apriori} or~\ref{lem:apriori_ukLa}). By modifying these components, the results can be extended  to prove $L^2$-accuracy for other rRKN schemes, demonstrating the flexibility of the theory.  %While a full analysis of the $7/2$-order scheme is beyond the scope of the regularity assumptions on $U$, we  outline its local properties below.  With stronger regularity assumptions, the framework  given below can be extended to prove, e.g., $7/2$-order $L^2$-accuracy of the rRKN scheme in \eqref{eq:rRKN3pt5}.  In turn, these $L^2$-accuracy bounds can be combined with  $L^2$-convergence results to estimate the asymptotic bias of the corresponding Markov kernels, as further discussed in Remark~\ref{rmk:asymp_bias} below.

\subsection{\texorpdfstring{$L^2$}{L2}-accuracy Theorems}

Let  $\opnorm{\cdot}$ denote the operator norm.  In this section, we frequently use the following weighted $\ell_2$-norm on phase space: \[
\wnorm{(x,v)}^2 \ = \ |x|^2 + L^{-1} |v|^2 \;,  \quad (x,v) \in \mathbb{R}^{2d} \;.
\] The corresponding inner product is given by \[ 
\langle (x,v), (\tilde{x},\tilde{v}) \rangle_{\mathsf{w}} = \langle x, \tilde{x} \rangle + L^{-1} \langle v, \tilde{v} \rangle \;,  \quad (x,v), (\tilde{x},\tilde{v})  \in \mathbb{R}^{2d} \;. 
\]   
\begin{assumption} 
\label{B123} The potential $U: \mathbb{R}^d \to \mathbb{R}$ is twice continuously differentiable and satisfies:
\begin{enumerate}[label=\textbf{A.\arabic*}]
\item $U$ has a global minimum at $0$ with $U(0)=0$. \label{B1}
\item $U$ is $L$-gradient Lipschitz continuous, i.e., there exists $L >0$ such that \[
|\nabla U(x) - \nabla U(y)| \ \le \ L \, |x-y| \quad \text{for all $x,y \in \mathbb{R}^d$.} \] \label{B2} 
\vspace{-0.5cm}
\item $U$ is $L_H$-Hessian Lipschitz continuous, i.e., there exists $L_H \ge 0$ such that \[
\opnorm{\nabla^2 U(x) - \nabla^2 U(y)} \ \le \ L_H \, |x-y| \quad \text{for all $x,y \in \mathbb{R}^d$.} \] 
\label{B3} 
\end{enumerate}
\end{assumption}
Under these regularity conditions on $U$, the rRKN integrator in \eqref{eq:rRKN2pt5} achieves $5/2$-order $L^2$-accuracy, while the Verlet integrator   achieves only second-order $L^2$-accuracy.

\begin{theorem}[$L^2$-accuracy of rRKN 2.5 w.r.t. Hamiltonian Flow]
\label{thm:rRKN_L2_accuracy}
Suppose that \ref{B1}-\ref{B3} hold. Let $T>0$ satisfy $L T^2 \le 1/4$  and let $h>0$ satisfy $T/h \in \mathbb{Z}$.  Then for any $x, v \in \mathbb{R}^d$ 
\begin{equation}
\begin{aligned}
&  \max_{t_{k} \le T}  \left( E \left[ \wnorm{(\tilde{Q}_{t_k}(x,v), \tilde{V}_{t_k}(x,v)) - (q_{t_k}(x,v), v_{t_k}(x,v))}^2 \right] \right)^{1/2}  \\
& \qquad \qquad \ \le \ \frac{11}{10} e^{5/2}  \wnorm{(x,v)}  (L^{1/2} h)^{5/2} +  9 e^{5/2} \wnorm{(x,v)}^2 \, L^{1/2} \frac{L_H h^3}{L^{1/4} h^{1/2}} 
   \;.  
   \end{aligned}
   \label{L2_err_rRKN}
\end{equation}
   Here,  $(q_{t_k}(x,v), v_{t_k}(x,v))$  and $(\tilde{Q}_{t_k}(x,v), \tilde{V}_{t_k}(x,v))$ are defined  by \eqref{exact} and \eqref{eq:rRKN2pt5}, respectively.
\end{theorem}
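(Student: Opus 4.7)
My strategy is the rRKN analogue of the fundamental theorem for $L^2$-convergence of stochastic numerical methods, assembled from the four ingredients advertised in the introduction. Set $\tilde Z_{t_k} := (\tilde Q_{t_k}, \tilde V_{t_k})$, $Z_{t_k} := (q_{t_k}, v_{t_k})$, and $e_k := \tilde Z_{t_k} - Z_{t_k}$. Telescoping a single step,
\begin{equation*}
e_{k+1} \ = \ \underbrace{\Theta_h(\mathcal U_k)(\tilde Z_{t_k}) - \varphi_h(\tilde Z_{t_k})}_{=:\,\Delta_k} \ + \ \big(\varphi_h(\tilde Z_{t_k}) - \varphi_h(Z_{t_k})\big) \;.
\end{equation*}
I decompose $\Delta_k = \bar\Delta_k + \tilde\Delta_k$, where $\bar\Delta_k := E[\Delta_k \mid \mathcal F_k]$ is the conditional bias and $\tilde\Delta_k$ the conditionally mean-zero residual, with $\mathcal F_k := \sigma(\mathcal U_0,\dots,\mathcal U_{k-1})$. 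Since $\tilde Z_{t_k}$ and $Z_{t_k}$ are $\mathcal F_k$-measurable, the conditional orthogonality of $\tilde\Delta_k$ yields a Pythagoras-like identity
\begin{equation*}
E\wnorm{e_{k+1}}^2 \ = \ E\wnorm{\tilde\Delta_k}^2 \ + \ E\wnorm{\bar\Delta_k + \varphi_h(\tilde Z_{t_k}) - \varphi_h(Z_{t_k})}^2.
\end{equation*}

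Next, I invoke Lemma~\ref{lem:lipschitz_exact_flow} to replace the Lipschitz factor by $(1 + c_1 L^{1/2} h)\wnorm{e_k}$, combined with a weighted Young inequality $(a+b)^2 \le (1+\alpha h) a^2 + (1+(\alpha h)^{-1}) b^2$ with $\alpha \asymp L^{1/2}$ to split off the bias; Lemma~\ref{lem:rRKN_local_mean_error} to bound the local bias $\wnorm{\bar\Delta_k}$ at order $h^4$, with a prefactor dominated by $L_H\wnorm{\tilde Z_{t_k}}^2$ plus a quadratic-in-$U$ residual that can be absorbed into the variance term; Lemma~\ref{lem:rRKN_local_L2_error} to bound the local mean-square $E[\wnorm{\tilde\Delta_k}^2 \mid \mathcal F_k] \lesssim L^3\wnorm{\tilde Z_{t_k}}^2 h^6$; and the numerical-stability Lemma~\ref{lem:apriori}, which under the budget $LT^2 \le 1/4$ controls $\sup_{t_k \le T} E[\wnorm{\tilde Z_{t_k}}^{4}]$ by an absolute constant times $\wnorm{(x,v)}^{4}$, so the state-dependent prefactors reduce to explicit functions of the initial data. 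The outcome is a one-step recursion of the shape
\begin{equation*}
E\wnorm{e_{k+1}}^2 \ \le \ (1 + c L^{1/2} h) E\wnorm{e_k}^2 \ + \ A\, h^6 \ + \ B\, h^7 \;,
\end{equation*}
with $A \propto L^3\wnorm{(x,v)}^2$ (variance) and $B \propto L_H^2 L\wnorm{(x,v)}^4$ (bias, carrying the extra $h$ from $(\alpha h)^{-1}$). Discrete Gronwall over $N=T/h$ steps yields $E\wnorm{e_N}^2 \le \exp(cL^{1/2} T)\cdot T\cdot (A h^5 + B h^6)$; the budget $LT^2 \le 1/4$ collapses the exponential to $e^{5}$ (or an equivalent refinement), and taking a square root produces the two terms of \eqref{L2_err_rRKN}: the variance piece scaling as $L^{5/4} h^{5/2}\wnorm{(x,v)}$ and the bias piece as $L^{1/4} L_H h^{5/2}\wnorm{(x,v)}^2$, which the theorem writes equivalently as $L^{1/2} L_H h^3/(L^{1/4} h^{1/2})\wnorm{(x,v)}^2$.

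The principal obstacle is the sharpness of constants. Rough-order $5/2$ convergence follows at once from the heuristic $p_2 = 3$, $p_1 = 4$, giving global order $p_2 - 1/2 = 5/2$; producing the precise prefactors $\tfrac{11}{10} e^{5/2}$ and $9 e^{5/2}$ requires an optimal choice of the Young parameter $\alpha$ against the Lipschitz constant supplied by Lemma~\ref{lem:lipschitz_exact_flow}, careful bookkeeping of the absolute constants inside Lemmas~\ref{lem:rRKN_local_mean_error}, \ref{lem:rRKN_local_L2_error}, and \ref{lem:apriori}, and arranging the Gronwall exponent so it just fits the $LT^2 \le 1/4$ budget and collapses to $e^{5/2}$ after taking a square root. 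The subtlest ingredient is the proof of Lemma~\ref{lem:rRKN_local_mean_error}: the triangular weight $2u$ on $\mathcal U$ (with $E[\mathcal U] = 2/3$, $E[\mathcal U^2] = 1/2$) is tuned so that $E[(\mathcal U h)^{-1}(\mathbf F^+ - \mathbf F^-)]$ agrees with the time-averaged exact force along $\varphi_h$ up to order $h^3$, raising the local bias from the Verlet value $h^3$ to $h^4$, with the Hessian-Lipschitz Taylor remainder controlling the residual by $\lesssim L_H\wnorm{\tilde Z_{t_k}}^2 h^4$; verifying this designed cancellation at the required precision is the arithmetic crux of the analysis.
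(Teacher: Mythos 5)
Your proof plan is the paper's argument: insert $\pm\varphi_h$ at each step, split the one-step error into its conditional mean (bias) plus a conditionally centered fluctuation, apply a weighted Young inequality with weight $L^{1/2}h$, invoke the Lipschitz bound (Lemma~\ref{lem:lipschitz_exact_flow}), the local bias bound (Lemma~\ref{lem:rRKN_local_mean_error}), the local $L^2$-error bound (Lemma~\ref{lem:rRKN_local_L2_error}), and the a priori bound (Lemma~\ref{lem:apriori}), and close via discrete Gr\"onwall. Your Pythagoras-first framing is algebraically identical to the paper's cross-term expansion: the paper keeps $E\wnorm{\Delta_k}^2$ whole and peels the bias off the cross term with $\varphi_h(\tilde{Z}_{t_k})-\varphi_h(Z_{t_k})$, whereas you split $\Delta_k=\tilde\Delta_k+\bar\Delta_k$ first; since $E\wnorm{\Delta_k}^2 = E\wnorm{\tilde\Delta_k}^2 + E\wnorm{\bar\Delta_k}^2$, the two give the same inequality after the Young step.

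One slip to flag in the $h$-power bookkeeping. The one-step recursion should read $Ah^6 + Bh^6$, not $Ah^6 + Bh^7$. Lemma~\ref{lem:rRKN_local_mean_error} gives a bias-squared of order $h^8$, and dividing by the Young weight $L^{1/2}h$ raises this to $h^7/L^{1/2}$, as you note; but the paper immediately absorbs the spare factor $L^{1/2}h\le 1/2$ into the numerical constant (this is precisely what is happening in the passage from the Lemma to \eqref{L2:2}), so the per-step bias contribution lands at order $h^6$ — the same order as the local $L^2$-error contribution from \eqref{L2:3}. Gr\"onwall then divides by one further $L^{1/2}h$, giving $h^5$ for both terms, and the square root yields $h^{5/2}$ for both, matching the two terms of \eqref{L2_err_rRKN}. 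As written, $Bh^7$ followed by your Gr\"onwall bound $T\cdot Bh^6$ would produce a bias piece of order $\sqrt{B}\,h^3$, which contradicts the $L^{1/4}L_H h^{5/2}\wnorm{(x,v)}^2$ that you (correctly) state as the final bias piece. The fix is to absorb $L^{1/2}h\le 1/2$ before Gr\"onwall, as the paper does.
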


 The proof of Theorem~\ref{thm:rRKN_L2_accuracy} is given in Section~\ref{sec:proofs}, and numerical verification is given in Figure~\ref{fig:strong_accuracy}.

\begin{remark}
For comparison, under only assumptions~\ref{B1}-\ref{B2}, Bou-Rabee \& Marsden \cite{BouRabeeMarsden2022} introduced an rRKN integrator with an $L^2$-accuracy given by: \begin{equation}
   \left( E\left[ |\tilde{Q}_{t_k}(x,v) - q_{t_k}(x,v)|^2 \right] \right)^{1/2} \, \le \,    71  \, ( |x| + L^{-1/2} |v|  ) \, (L^{1/2} h)^{3/2} \;.  \label{L2_err_smc}
\end{equation} 
However, the $L^2$-accuracy of this 1.5 rRKN integrator does not improve if we also assume \ref{B3}.
\end{remark}

\begin{remark} \label{rmk:msefundamental}
 The $5/2$-order $L^2$-accuracy of the rRKN time integrator is reminiscent of the classical Fundamental Theorem for $L^2$-Convergence of Strong Numerical Methods for SDEs, which roughly states: if $p_1 \ge p_2 + 1/2 $ and $p_2 \ge 1/2$ are the local orders of mean and mean-square accuracy (respectively), then the global $L^2$-accuracy of the method is of order $p_2 - 1/2$ \cite[Theorem 1.1.1]{Milstein2021}.  For strong numerical methods for SDEs, this  $(p_2 - 1/2)$-order (as opposed to $(p_2 - 1)$-order) arises due to cancellations in the $L^2$-error expansion, stemming from the independence of the Brownian increments.   Consequently, the expectation of cross terms involving these increments vanishes because they have zero mean. Analogous cancellations (to leading order)  occur in the 2.5 rRKN integrator due to the independence of the sequence of triangular random variables it uses.   
\end{remark}

\begin{figure}[ht!]
\centering
\includegraphics[width=0.49\textwidth]{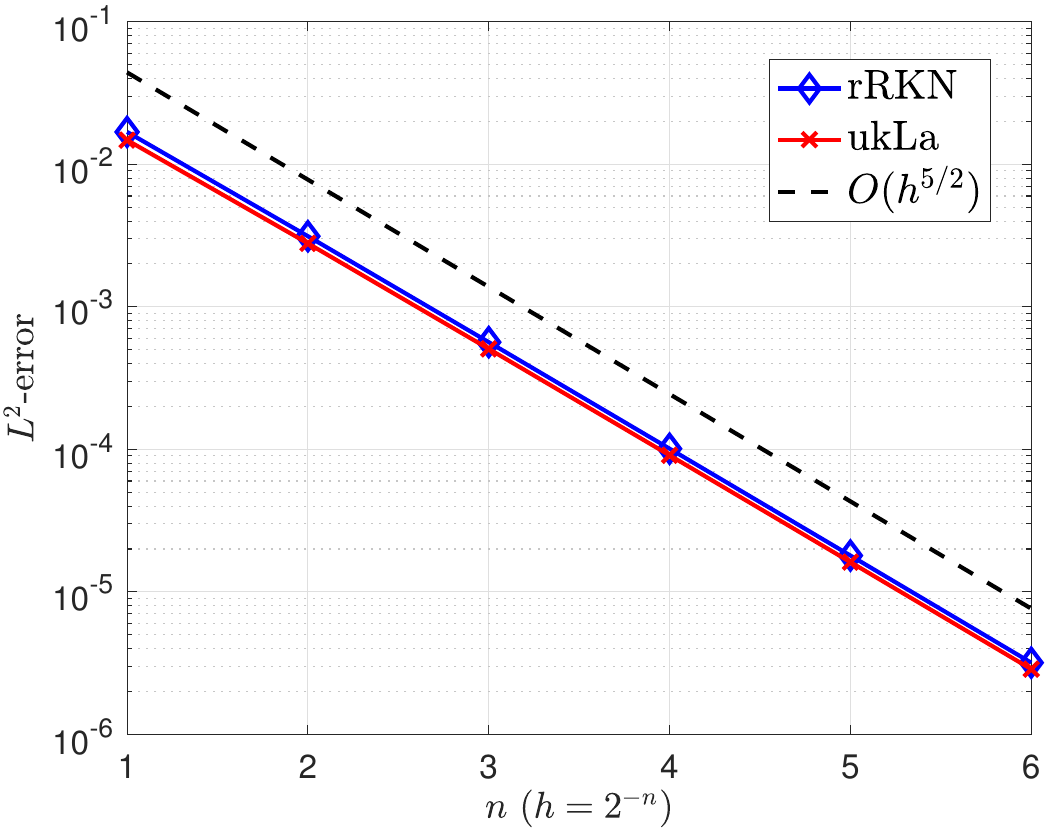}  \hfill
\includegraphics[width=0.49\textwidth]{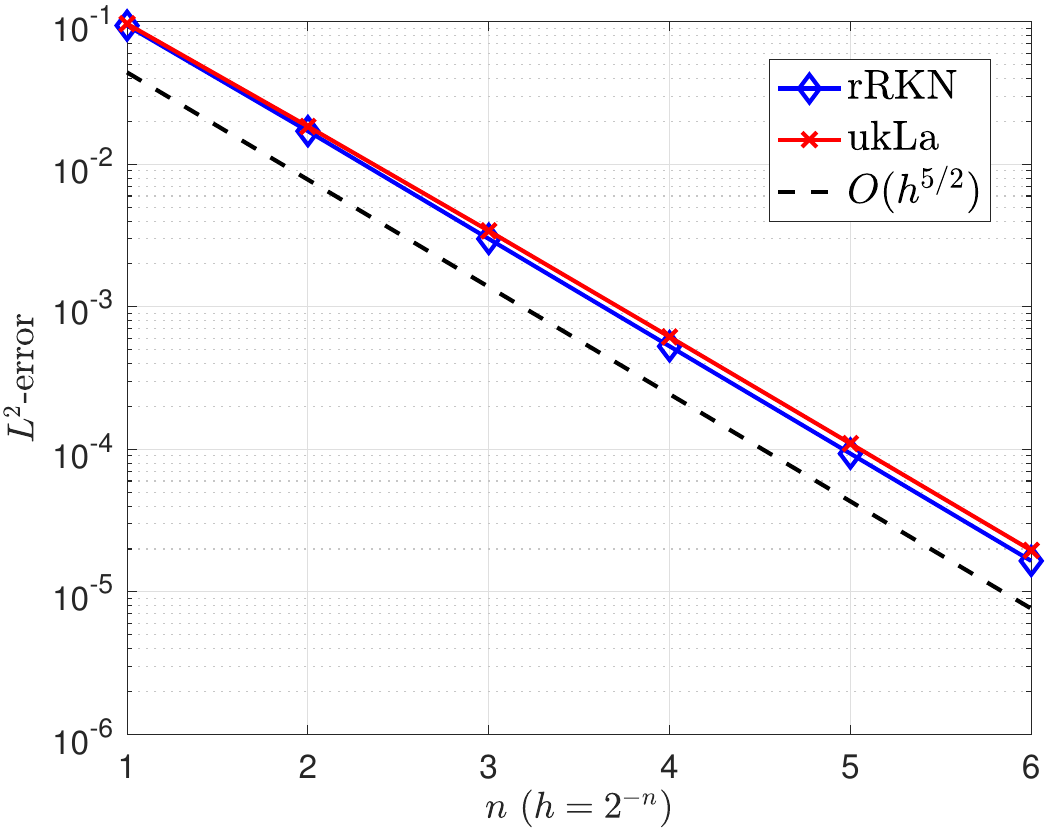}
\caption{ \small {\bf $L^2$-Accuracy  Verification.} Left Image: A plot of the $L^2$-error in $(x,v)$-space of the rRKN time integrator for the linear oscillator with Hamiltonian $H(x,v) = (1/2) (v^2 + x^2)$.  Right Image: A plot of the $L^2$-error in $(x,v)$-space of the rRKN time integrator for a double-well system with Hamiltonian $H(x,v) = (1/2) (v^2 + (1-x^2)^2)$. Both simulations have initial condition $(1,1)$ and unit duration. The time step sizes tested are $2^{-n}$ where $n$ is given on the horizontal axis. The dashed curve is $2^{-5 n/2} = h^{5/2}$ versus $n$. Corresponding results for the ukLa chain (cf.~Theorem~\ref{thm:ukLa_L2_accuracy}) are shown (in red x-marks) with friction factor $\gamma=1/4$.  }
\label{fig:strong_accuracy}
\end{figure}

The next theorem states $5/2$-order $L^2$-accuracy of  ukLa defined in \eqref{OrRKN} relative to the exact splitting defined in \eqref{OX}.

\begin{theorem}[$L^2$-accuracy of rRKN 2.5 based ukLa w.r.t.~Exact Splitting]
\label{thm:ukLa_L2_accuracy}
Suppose that \ref{B1}-\ref{B3} hold. Let $T>0$ satisfy $L T^2 \le 1/4$,  and let $h>0$ satisfy $T/h \in \mathbb{Z}$.  Then for any $z = (x,v) \in \mathbb{R}^{2 d}$, 
\begin{equation}
\begin{aligned}
&  \max_{t_{k} \le T}   \left( E \left[ \wnorm{Z^u_{t_{k}}(z) - Z^e_{t_{k}}(z)  }^2 \right] \right)^{1/2}  \, \le \,  \\
& \qquad  \quad \frac{3}{2} e^{5/2} (L^{1/2} h)^{5/2} \left( \wnorm{z} +  \gamma^{1/2} L^{-1/2} d^{1/2} C_1^{1/2} \sqrt{\frac{e^{2 \gamma T} - 1}{2 \gamma}}   \right)  \\
&  +  14 \sqrt{2} e^{5/2} L^{1/2} \frac{L_H h^3}{L^{1/4} h^{1/2}} \left(   \wnorm{z}^2 + 2 \gamma L^{-1} d C_2^{1/2} T^{1/2}  \sqrt{\frac{e^{4 \gamma T} - 1}{4 \gamma}} \right)  
   \;,  \label{L2_err_ukLa}
   \end{aligned}
   \end{equation}
   where $C_1$ and $C_2$ are numerical constants each coming from an application of the Burkholder-Davis-Gundy inequality.  Here, $Z^u_{t_{k}}(z)$ and $Z^e_{t_{k}}(z)$ appearing in \eqref{L2_err_ukLa} are defined by \eqref{OrRKN} and \eqref{OX}, respectively.  
\end{theorem}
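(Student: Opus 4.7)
The plan is to adapt the argument of Theorem~\ref{thm:rRKN_L2_accuracy} step by step, modified to accommodate the OU noise injected by ukLa at every transition. Let $e_i := Z^u_{t_i}(z) - Z^e_{t_i}(z)$. The starting observation is that the OU map $O_h(\xi_i)(x,v) = (x, e^{-\gamma h} v + (1-e^{-2\gamma h})^{1/2}\xi_i)$ is affine in $(x,v)$, with state-dependent linear part $L_h : (x,v) \mapsto (x, e^{-\gamma h} v)$ independent of $\xi_i$, so the noise cancels identically in the one-step difference:
\[
 e_{i+1} = L_h\bigl(\Theta_h(\mathcal{U}_i)(Z^u_{t_i}) - \varphi_h(Z^e_{t_i})\bigr).
\]
Moreover $\wnorm{L_h(a)}^2 = |a_x|^2 + L^{-1} e^{-2\gamma h}|a_v|^2 \le \wnorm{a}^2$, so $L_h$ is non-expansive in the weighted norm and can simply be dropped in $L^2$ estimates.

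Next, I would split the one-step difference into the rRKN local error at $Z^u_{t_i}$ and the Lipschitz propagation of the accumulated error,
\[
\Theta_h(\mathcal{U}_i)(Z^u_{t_i}) - \varphi_h(Z^e_{t_i}) = \underbrace{(\Theta_h(\mathcal{U}_i) - \varphi_h)(Z^u_{t_i})}_{R_i} + \underbrace{\varphi_h(Z^u_{t_i}) - \varphi_h(Z^e_{t_i})}_{\Delta_i},
\]
and decompose $R_i = M_i + N_i$ into its conditional mean $M_i = E[R_i\mid \sigf_i]$ and conditionally mean-zero remainder $N_i$. Lemma~\ref{lem:lipschitz_exact_flow} bounds $\wnorm{\Delta_i} \le (1 + C h)\wnorm{e_i}$, while Lemmas~\ref{lem:rRKN_local_mean_error} and~\ref{lem:rRKN_local_L2_error} applied at the random base point $Z^u_{t_i}$ give $\wnorm{M_i} \le \mathbf{C}^{\mathrm{bias}}(Z^u_{t_i}) h^{4}$ and $E[\wnorm{N_i}^2\mid \sigf_i] \le \mathbf{C}^{\mathrm{var}}(Z^u_{t_i}) h^{6}$, with state-dependent constants that are quadratic, respectively quartic, in $\wnorm{Z^u_{t_i}}$. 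Squaring $\wnorm{e_{i+1}}^2 \le \wnorm{\Delta_i + M_i + N_i}^2$, taking expectations, and using that $N_i$ is $\sigf_i$-conditionally mean-zero (which kills the cross terms $\langle \Delta_i + M_i, N_i\rangle_w$) yields a recursion of the form
\[
E\wnorm{e_{i+1}}^2 \le (1+Ch)^2(1+h)\, E\wnorm{e_i}^2 + (1+h^{-1})\,E\wnorm{M_i}^2 + E\wnorm{N_i}^2.
\]
Iterating and invoking discrete Gronwall under $LT^2 \le 1/4$ delivers the $5/2$-order rate, reflecting the Milstein heuristic noted in Remark~\ref{rmk:msefundamental}: the half-power saving over the naive rate comes precisely from independence of the $\mathcal{U}_i$, which orthogonalizes the centered parts $N_i$.

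The main obstacle is controlling the state-dependent moments $E[\mathbf{C}^{\mathrm{bias}}(Z^u_{t_i})^2]$ and $E[\mathbf{C}^{\mathrm{var}}(Z^u_{t_i})]$ uniformly in $i$; unlike the pure Hamiltonian case, ukLa continually injects OU noise, so $\wnorm{Z^u_{t_i}}$ is genuinely random and its moments grow with $T$. I would use the a priori stability result Lemma~\ref{lem:apriori_ukLa} to express $Z^u_{t_i}$ as the $\mu$-preserving exact splitting chain $Z^e_{t_i}$ plus a perturbation of size governed by the very local rRKN error already being estimated. Bounding $E\wnorm{Z^e_{t_i}}^{2k}$ then reduces to controlling the discrete OU-driven process started from $z$; a discrete Burkholder–Davis–Gundy inequality applied to the martingale $\sum_{j<i} e^{-\gamma(i-j)h}(1-e^{-2\gamma h})^{1/2}\,\xi_j$ supplies the universal constants $C_1, C_2$ together with the geometric factors $\sqrt{(e^{2\gamma T}-1)/(2\gamma)}$ (from the second moment, matched to the quadratic bias constant $\wnorm{\cdot}^2$) and $\sqrt{(e^{4\gamma T}-1)/(4\gamma)}$ (from the fourth moment, matched to the quartic variance constant). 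Tracking these moments additively alongside the initial-state contribution $\wnorm{z}$ produces the two summands appearing in~\eqref{L2_err_ukLa}; keeping the bookkeeping consistent with the weighted norm and with the $\sqrt{T}$ versus $T$ separation between the bias and variance accumulations is the delicate part of the argument.
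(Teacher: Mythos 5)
Your plan mirrors the paper's proof almost step by step. The OU-noise cancellation plus non-expansiveness of the deterministic linear part in $\wnorm{\cdot}$ is exactly how the paper removes the $O_h(\xi_k)$ factor (stated there as ``the OU-substep is a contraction in $\wnorm{\cdot}$''). Your decomposition into a Lipschitz-propagated part $\Delta_i$, a conditional bias $M_i$, and a conditionally centered remainder $N_i$, with the cross terms killed by conditional orthogonality, is algebraically equivalent to the paper's split into terms $\rn{1}$, $\rn{2}$, $\rn{3}$ via the tower property followed by Cauchy--Schwarz and Young's inequality with $\varepsilon = L^{1/2}h$ (note $E\wnorm{R_i}^2 = \wnorm{M_i}^2 + E\wnorm{N_i}^2$, so the two bookkeepings match to leading order). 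The use of Lemmas~\ref{lem:lipschitz_exact_flow},~\ref{lem:rRKN_local_mean_error},~\ref{lem:rRKN_local_L2_error},~\ref{lem:apriori_ukLa} and the discrete Gr\"{o}nwall inequality is identical.

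One passage is logically problematic as written: you propose to bound moments of $Z^u_{t_i}$ by ``express[ing] $Z^u_{t_i}$ as the $\mu$-preserving exact splitting chain $Z^e_{t_i}$ plus a perturbation of size governed by the very local rRKN error already being estimated.'' That is circular --- $Z^u - Z^e$ is precisely what the theorem is trying to control, so you cannot use a bound on it to establish the a priori moments you need as an input. The paper's Lemma~\ref{lem:apriori_ukLa} instead compares $Z^u$ to the \emph{force-free} chain $Z^0$ (position transport plus OU in velocity), whose moments follow directly from the explicit OU representation and Burkholder--Davis--Gundy; the constants $C_1, C_2$ and the factors $\sqrt{(e^{2\ell\gamma T}-1)/(2\ell\gamma)}$ you correctly anticipate come from there. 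Since you invoke Lemma~\ref{lem:apriori_ukLa} as a black box, the circularity would not actually surface if you simply cited its conclusion; but your sketch of how to re-derive it would not go through.

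Finally, a couple of small slips: your recursion factor $(1+Ch)^2(1+h)$ has a dimensionally inconsistent $(1+h)$ where $(1+L^{1/2}h)$ belongs, and Lemma~\ref{lem:rRKN_local_L2_error} gives an almost-sure bound on $\wnorm{R_i}^2$, not directly on $E[\wnorm{N_i}^2\mid\sigf_i]$ --- the latter is bounded by the former since it is a conditional variance, but the statement should be made cleanly. Neither affects the validity of the overall strategy.
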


 The proof of Theorem~\ref{thm:ukLa_L2_accuracy} is given in Section~\ref{sec:proofs}, and numerical verification is given in Figure~\ref{fig:strong_accuracy}. 

\begin{remark}
Asymptotic bias bounds for the corresponding unadjusted Hamiltonian and kinetic Langevin Monte Carlo kernels can be directly derived from the $L^2$-accuracy bounds established in Theorem~\ref{thm:rRKN_L2_accuracy} and Theorem~\ref{thm:ukLa_L2_accuracy}.  For example, the results in  \cite[Theorem~6]{BouRabeeMarsden2022} show that, under the assumption of $K$-strong convexity on the potential $U$,  any stable and consistent numerical scheme will inherit the $L^2$-Wasserstein contractivity properties of the exact HMC sampler.  Specifically, the $L^2$-Wasserstein asymptotic bias will involve an accumulation of the finite-time strong error that
can be upper bounded by $1/c$ times the strong accuracy of the method where $c \propto K T^2$ and $T$ is the duration of the Hamiltonian flow.   Recalling that $\mu$ denotes the invariant measure on phase space in \eqref{eq:muk}, it follows from Theorem~\ref{thm:rRKN_L2_accuracy} that the $L^2$-Wasserstein asymptotic bias 
of uHMC with the 2.5 rRKN integration scheme  \eqref{uHMC} is upper bounded by: \begin{equation} 
\label{eq:aymp_bias}
\begin{aligned}
\mathcal{O}\Bigg( \frac{1}{KT^2} \bigg(  & E_{(X,V) \sim \mu} (\wnorm{(X,V)}) (L^{1/2} h)^{5/2} 
 +   E_{(X,V) \sim \mu} (\wnorm{(X,V)}^2)  \, L^{1/2} \frac{L_H h^3}{L^{1/4} h^{1/2}}  \bigg)  \Bigg) \;,
\end{aligned}
\end{equation}
In particular, for the canonical Gaussian measure ($L_H=0$), we have   $E_{(X,V) \sim \mu} (\wnorm{(X,V)}) \propto \sqrt{d}$.  From \eqref{eq:aymp_bias}, it follows that the dimension-dependence of the corresponding complexity is  $\mathcal{O}(d^{1/5})$.  In contrast, for standard HMC, the dimension dependence  is $\mathcal{O}(d^{1/4})$ for this model.  This reduction in dimension dependence quantitatively highlights the benefit of the improved $L^2$-accuracy of the 2.5 rRKN scheme for unadjusted HMC.
\end{remark}

\subsection{Lemmas used in proofs of Theorems}

The proofs of Theorems~\ref{thm:rRKN_L2_accuracy} and \ref{thm:ukLa_L2_accuracy} use a discrete Gr\"{o}nwall inequality, which is stated here for the reader's convenience.  
\begin{lemma}[Discrete Gr\"{o}nwall inequality]
\label{lem:groenwall}
Let $h > 0$ and $\lambda \in \mathbb{R}$ be such that $1+\lambda h > 0$.  Suppose that $(g_k)_{k \in \mathbb{N}_0}$ is a non-decreasing sequence, and $(a_k)_{k \in \mathbb{N}_0}$ satisfies $a_{k+1} \le (1 + \lambda h) a_{k} + g_{k}$ for $k \in \mathbb{N}_0$.  Then it holds \[
a_k \ \le \  (1+\lambda h)^k a_0 + \frac{1}{\lambda h} ( (1+\lambda h)^k - 1) g_{k-1} , \quad \text{for all $k \in \mathbb{N}$} \;.
\]
\end{lemma}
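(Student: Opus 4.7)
The plan is to proceed by straightforward induction on $k \in \mathbb{N}$, iterating the one-step bound $a_{k+1} \le (1+\lambda h) a_k + g_k$ to obtain an explicit geometric sum, then using the monotonicity of $(g_j)_{j \in \mathbb{N}_0}$ to replace each $g_j$ with $g_{k-1}$ and summing the geometric series.

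More concretely, first I would establish by induction the intermediate identity/bound
\[
a_k \ \le \ (1+\lambda h)^k a_0 + \sum_{j=0}^{k-1} (1+\lambda h)^{k-1-j} g_j \quad \text{for all } k \in \mathbb{N} \;.
\]
The base case $k=1$ is just the hypothesis $a_1 \le (1+\lambda h) a_0 + g_0$. For the inductive step, assuming the displayed bound for $k$, applying the one-step hypothesis and using $1+\lambda h > 0$ (so the inequality is preserved when multiplied through) yields
\[
a_{k+1} \ \le \ (1+\lambda h) a_k + g_k \ \le \ (1+\lambda h)^{k+1} a_0 + \sum_{j=0}^{k-1} (1+\lambda h)^{k-j} g_j + g_k \;,
\]
and the last term $g_k$ folds into the sum by setting $j=k$.

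Next I would invoke monotonicity: since $g_j \le g_{k-1}$ for every $0 \le j \le k-1$, and since $(1+\lambda h)^{k-1-j} > 0$, one may bound
\[
\sum_{j=0}^{k-1} (1+\lambda h)^{k-1-j} g_j \ \le \ g_{k-1} \sum_{i=0}^{k-1} (1+\lambda h)^i \ = \ g_{k-1} \cdot \frac{(1+\lambda h)^k - 1}{\lambda h} \;,
\]
where the final equality is the standard geometric series formula, valid whenever $\lambda h \ne 0$ (the degenerate case $\lambda h = 0$ simply gives $k g_{k-1}$, consistent with the limit). Combining this with the inductive bound produces exactly the claimed inequality.

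There is no real obstacle here; the only subtleties are (i) keeping the sign $1 + \lambda h > 0$ in mind so that multiplying an inequality by this factor preserves the direction (which matters since $\lambda$ may be negative), and (ii) correctly handling the geometric sum when $\lambda < 0$, where one must be careful about the sign of $\lambda h$ in the denominator — but since $(1+\lambda h)^k - 1$ and $\lambda h$ always share sign, the ratio remains non-negative, so the bound is meaningful.
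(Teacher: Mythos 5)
Your proof is correct. The paper states this lemma without proof (as a standard result "for the reader's convenience"), and your argument is the canonical one: iterate the one-step recursion to get a discrete Duhamel-type bound $a_k \le (1+\lambda h)^k a_0 + \sum_{j=0}^{k-1}(1+\lambda h)^{k-1-j} g_j$, then majorize each $g_j$ by $g_{k-1}$ using monotonicity and positivity of the weights $(1+\lambda h)^{k-1-j}$, and sum the geometric series. Your remarks on the sign bookkeeping are exactly the right things to keep track of, and the whole argument goes through as you wrote it.
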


\begin{comment}
\begin{lemma}[Discrete Integral Gr\"{o}nwall inequality]
\label{lem:groenwall}
Let $e_k, g_k$ be positive sequences with $g_k$ increasing. Suppose for all $k \in \mathbb{N}$
\[e_k \leq \kappa \sum_{j=0}^{k-1} e_j + g_j\]
Then, for all $k \in \mathbb{N}$,
\[ e_k \leq (1+ \kappa)^{k} \left( e_0 + g_k \right). \]
\end{lemma}
\end{comment}

The proof of Theorem~\ref{thm:rRKN_L2_accuracy} uses the following a priori bounds on the rRKN numerical solution.  

\begin{lemma}[A priori bounds for rRKN 2.5] \label{lem:apriori}
Suppose \ref{B1}-\ref{B2} hold. Let $T>0$ satisfy $L T^2 \le 1/4$  and let $h \ge 0$ satisfy $T/h \in \mathbb{Z}$ if $h>0$.  For any $x,v\in\R^d$, we have almost surely
	\begin{align}
	\label{apriori:1}
&\max_{t_{k} \le T}  |\tilde{Q}_{t_k}(x,v)|  \  \leq \   (1+L (T^2+Th)) \max(|x|,|x+T v|) \le \frac{3}{2} (|x| + T |v| ) \;, \\
&\max_{t_{k} \le T}  |\tilde{V}_{t_k}(x,v)|   \  \le \ 
|v| + L T (1+L (T^2+T h))  \max(|x|,|x+T v|)  \ \le \ \frac{11}{8} |v| + \frac{3}{4}  L^{1/2} |x| \;. \label{apriori:2}
\end{align} 
Here, $(\tilde{Q}_{t_k}(x,v), \tilde{V}_{t_k}(x,v))$ is defined by \eqref{eq:rRKN2pt5}.
\end{lemma}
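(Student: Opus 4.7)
The plan is to compare the rRKN trajectory to the free-motion line $t \mapsto x + tv$, whose modulus satisfies $|x + t_k v| \le M := \max(|x|, |x+Tv|)$ for $t_k \in [0,T]$ (as a convex combination), together with $|v| \le 2M/T$. The key device is the pair of backward and forward linear extrapolations
\[
\tilde{B}_k := \tilde{Q}_{t_k} - t_k \tilde{V}_{t_k}, \qquad \tilde{S}_k := \tilde{Q}_{t_k} + (T - t_k) \tilde{V}_{t_k},
\]
for which a direct check gives $\tilde{Q}_{t_k} = (1 - t_k/T) \tilde{B}_k + (t_k/T) \tilde{S}_k$, hence $|\tilde{Q}_{t_k}| \le \max(|\tilde{B}_k|, |\tilde{S}_k|)$. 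Since $\tilde{B}_0 = x$ and $\tilde{S}_0 = x + Tv$, the worst-case initial value is exactly $M$, and it suffices to show $|\tilde{B}_k - \tilde{B}_0|$ and $|\tilde{S}_k - \tilde{S}_0|$ stay controlled by $L(T^2+Th)$-sized quantities.

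First I would establish a.s.\ bounds on the rRKN residuals. Under \ref{B1}-\ref{B2}, $\nabla U(0)=0$ gives $|\mathbf{F}(y)| \le L|y|$, and the gradient-Lipschitz property applied to the explicit argument of $\tilde{\mathbf{F}}_{t_k}^+$ yields $|\tilde{\mathbf{F}}_{t_k}^+ - \tilde{\mathbf{F}}_{t_k}^-| \le L \mathcal{U}_k h |\tilde{V}_{t_k}| + (L/2)(\mathcal{U}_k h)^2 |\tilde{\mathbf{F}}_{t_k}^-|$. The $\mathcal{U}_k$ factors cancel the potentially-singular $1/\mathcal{U}_k$ in the scheme, producing deterministic bounds on the non-Euler correction terms $R_k^Q := (h^2/2)\tilde{\mathbf{F}}_{t_k}^- + (h^2/(6\mathcal{U}_k))(\tilde{\mathbf{F}}_{t_k}^+-\tilde{\mathbf{F}}_{t_k}^-)$ and $R_k^V := (h/(2\mathcal{U}_k))(\tilde{\mathbf{F}}_{t_k}^+-\tilde{\mathbf{F}}_{t_k}^-)$ that are linear in $|\tilde{Q}_{t_k}|, |\tilde{V}_{t_k}|$.

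Next, substituting the rRKN update into the increments of $\tilde{B}_k$ and $\tilde{S}_k$ and using $h = t_{k+1} - t_k$ to cancel the $h \tilde{V}_{t_k}$ contribution against the linear extrapolation, one obtains the identity
\[
\tilde{B}_{k+1} - \tilde{B}_k = -h \tilde{\mathbf{F}}_{t_k}^-(t_k + h/2) - R_k^V(t_{k+1} - h/3),
\]
and a mirror-image formula for $\tilde{S}_{k+1} - \tilde{S}_k$ with $(T-t_k-h/2), (T-t_{k+1}+h/3)$ replacing the temporal weights. Telescoping and using the elementary identities $\sum_{j=0}^{k-1} h(t_j + h/2) = t_k^2/2 \le T^2/2$ and $\sum_{j=0}^{k-1} h\, t_{j+1} \le (T^2+Th)/2$ (and their $\tilde{S}$-analogues), together with $|\tilde{\mathbf{F}}^-| \le L Q^*$, yields
\[
|\tilde{B}_k|,\, |\tilde{S}_k| \le M + \tfrac{LT^2}{2} Q^* + \tfrac{Lh(T^2+Th)}{4} V^* + \tfrac{L^2 h^2 (T^2+Th)}{8} Q^*,
\]
where $Q^* := \max_{t_k \le T}|\tilde{Q}_{t_k}|$ and $V^* := \max_{t_k \le T}|\tilde{V}_{t_k}|$.

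To close the bootstrap, I would separately bound $V^*$ via $\tilde{V}_{t_k} - v = h \sum_{j<k} \tilde{\mathbf{F}}_{t_j}^- + \sum_{j<k} R_j^V$, giving $V^*(1 - LTh/2) \le |v| + LT(1 + Lh^2/4) Q^*$. Combined with $|v| \le 2M/T$ and substituted into the previous display, this produces a scalar linear inequality involving only $Q^*$ and $M$, whose coefficients are all controlled by the assumption $LT^2 \le 1/4$ (and $h \le T$, so $LTh, Lh^2 \le 1/4$). Inverting gives $Q^* \le (1 + L(T^2+Th))M$, and back-substitution yields the velocity bound. The second (simpler) inequalities in \eqref{apriori:1}-\eqref{apriori:2} follow from $M \le |x| + T|v|$ together with the numerical estimate $LT \le L^{1/2}/2$ (from $LT^2 \le 1/4$), which turns the $LT|x|$ contribution into $(3/4)L^{1/2}|x|$ and the $LT^2 |v|$ contribution into $(3/8)|v|$. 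The main obstacle is bookkeeping: a handful of higher-order correction terms (involving $L^2 h^3, L^2 h^4$, and cross-products such as $Lh^2 \cdot LTh$) must be systematically absorbed into the leading $L(T^2+Th)$ contribution, and the coefficient of $Q^*$ in the bootstrap must be estimated tightly enough to land cleanly on the explicit factor $3/2$ upon inversion.
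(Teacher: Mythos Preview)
The paper does not give its own proof but defers to Lemma~3.1 of \cite{BoEbZi2020}; your approach via the backward/forward extrapolations $\tilde B_k,\tilde S_k$ and the convex-combination identity $|\tilde Q_{t_k}|\le\max(|\tilde B_k|,|\tilde S_k|)$ is exactly the device used there, adapted from Verlet to rRKN with the $\mathcal{U}_k$-cancellation you describe. One simplification that eases the bookkeeping you flag as the main obstacle: since $\tilde S_k - \tilde B_k = T\tilde V_{t_k}$, you get $|\tilde V_{t_k}| \le (2/T)\max(|\tilde B_k|,|\tilde S_k|)$ for free, so the bootstrap closes in the single scalar $A^* := \max_k \max(|\tilde B_k|,|\tilde S_k|)$ without the separate velocity estimate and the substitution $|v|\le 2M/T$.
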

The proof of Lemma~\ref{lem:apriori} is nearly identical to the proof of Lemma~3.1 of \cite{BoEbZi2020} and hence is omitted.  The following special case of  Lemma~\ref{lem:apriori}   is frequently used  in the proofs that follow: \begin{align}
\label{apriori:2a}
\sup_{0 \le s \le h} |v_s(x,v)|^2 \ &\le \ 4 |v|^2 + L |x|^2 \;.
\end{align}
The proof of Theorem~\ref{thm:ukLa_L2_accuracy} uses the following upper bounds on the moments of ukLa. 

\begin{lemma}[Moment bounds for ukLa with rRKN 2.5] \label{lem:apriori_ukLa}
Suppose \ref{B1}-\ref{B2} hold. Let $T>0$ satisfy $L T^2 \le 1/4$  and let $h \ge 0$ satisfy $T/h \in \mathbb{Z}$ if $h>0$.  For any $z = (x,v) \in\R^d$ and $\ell \in \mathbb{N}$, we have 	\begin{align}	\label{eq:apriori_ukLa}
&\max_{t_{k} \le T}  E\left( \wnorm{Z^u_{t_k}(z)}^{2 \ell} \right)   \  \leq \  \frac{18^{\ell}}{2} \wnorm{z}^{2 \ell} + \frac{9^{\ell}}{2}  (2 \gamma L^{-1})^{\ell} d^{\ell} C_{\ell}  T^{\ell-1} \frac{e^{2 \ell \gamma T} - 1}{2 \ell \gamma}   \;,
\end{align}
where $C_{\ell}$ is a numerical constant that only depends on $\ell$. Here, $Z^u_{t_k}(z)$ is defined by \eqref{OrRKN}.
\end{lemma}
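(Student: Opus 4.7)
The plan is to iterate a one-step moment bound and then apply the discrete Gr\"onwall inequality (Lemma~\ref{lem:groenwall}). I will first rewrite one step of the ukLa chain as $Z^u_{t_{k+1}} = (\tilde Q_k, e^{-\gamma h}\tilde V_k + \sqrt{1-e^{-2\gamma h}}\,\xi_k)$, where $(\tilde Q_k, \tilde V_k) := \Theta_h(\mathcal{U}_k)(Z^u_{t_k})$. Applying Young's inequality $|a+b|^2 \le (1+\delta)|a|^2 + (1+\delta^{-1})|b|^2$ to the new velocity coordinate with the optimized choice $\delta = e^{2\gamma h}-1$ (for which $(1+\delta)e^{-2\gamma h} = 1$ and $(1+\delta^{-1})(1-e^{-2\gamma h}) = 1$) gives the clean one-step inequality
\begin{equation*}
\wnorm{Z^u_{t_{k+1}}}^2 \;\le\; \wnorm{(\tilde Q_k, \tilde V_k)}^2 \;+\; L^{-1}|\xi_k|^2.
\end{equation*}

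Next, I control the rRKN 2.5 sub-step using Lemma~\ref{lem:apriori} applied with time horizon $T = h$. Combining $|\tilde Q_k| \le \tfrac{3}{2}(|X^u_{t_k}| + h|V^u_{t_k}|)$ and $|\tilde V_k| \le \tfrac{11}{8}|V^u_{t_k}| + \tfrac{3}{4}L^{1/2}|X^u_{t_k}|$ with the hypothesis $Lh^2 \le 1/4$ yields $\wnorm{(\tilde Q_k, \tilde V_k)}^2 \le \tfrac{45}{8}\wnorm{Z^u_{t_k}}^2 \le 9\wnorm{Z^u_{t_k}}^2$ almost surely, so that $\wnorm{Z^u_{t_{k+1}}}^2 \le 9\wnorm{Z^u_{t_k}}^2 + L^{-1}|\xi_k|^2$ pointwise.

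To pass to the $2\ell$-th moment without incurring a catastrophic per-step factor of $9^\ell$, I decompose $Z^u_{t_k}(z) = \tilde Z_{t_k}(z) + \Delta_k$, where $\tilde Z_{t_k}(z)$ denotes the noise-free trajectory (all $\xi_j$ set to zero) and $\Delta_k$ is the noise-driven perturbation. By Lemma~\ref{lem:apriori} applied to the pure $\Theta_h$ chain combined with the non-expansiveness of $O_h(0)$ in $\wnorm{\cdot}$ (since $e^{-\gamma h}\le 1$), I obtain $\wnorm{\tilde Z_{t_k}(z)}^2 \le 9\,\wnorm{z}^2$ uniformly over $t_k \le T$, hence $\wnorm{\tilde Z_{t_k}(z)}^{2\ell} \le 9^\ell \wnorm{z}^{2\ell}$. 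The perturbation $\Delta_k$ is then handled by a telescoping argument: writing $\Delta_k$ as a sum of contributions from each $\xi_j$ and using a Lipschitz estimate for the ukLa propagator over sub-intervals of length $\le T$ (derivable analogously to Lemma~\ref{lem:apriori} from the $L$-Lipschitz continuity of $\mathbf{F}$, with constant $\le 3$ in $\wnorm{\cdot}$), each contribution is dominated by $3L^{-1/2}\sqrt{1-e^{-2\gamma h}}\,|\xi_j|$.

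Combining the two estimates via $(A+B)^\ell \le 2^{\ell-1}(A^\ell + B^\ell)$ produces the coefficient $\frac{18^\ell}{2} = 2^{\ell-1}\cdot 9^\ell$ on $\wnorm{z}^{2\ell}$, while the Gaussian moment bound $E|\xi_j|^{2\ell} \le C_\ell d^\ell$ together with a Burkholder-Davis-Gundy-type control on the independent sum yields the factor $(2\gamma L^{-1})^\ell d^\ell C_\ell$, and $\frac{e^{2\ell\gamma T}-1}{2\ell\gamma}$ emerges when summing $(1+2\ell\gamma h)^j$-type terms as in Lemma~\ref{lem:groenwall}, using $1 - e^{-2\gamma h} \le 2\gamma h$ and $k h \le T$. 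The main obstacle is that $\Theta_h$ is nonlinear in the state, so a naive linear decomposition of $\Delta_k$ into independent Gaussian contributions fails; the resolution is precisely the Lipschitz comparison for the ukLa propagator, which reduces the bookkeeping to an $L^{2\ell}$-estimate on a sum of independent mean-zero Gaussian vectors whose iid contributions carry the $d^\ell$ dimensional scaling.
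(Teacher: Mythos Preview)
Your decomposition $Z^u_{t_k} = \tilde Z_{t_k} + \Delta_k$ with a \emph{noise-free} reference is the wrong coupling, and it produces a genuine gap. After the Lipschitz telescoping you describe, what you actually obtain is the pointwise bound
\[
\wnorm{\Delta_k} \;\le\; C_{\mathrm{Lip}}\, L^{-1/2}\sqrt{1-e^{-2\gamma h}}\sum_{j=0}^{k-1}|\xi_j|\,,
\]
a sum of \emph{norms}, not a sum of mean-zero vectors. The nonlinearity of $\Theta_h$ means the increments $W^{(j+1)}_{t_k}-W^{(j)}_{t_k}$ are not centered, so there is no martingale structure left for BDG. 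Estimating $E\big[(\sum_j|\xi_j|)^{2\ell}\big]$ directly gives a dominant term of order $(k\sqrt d)^{2\ell}$, and with $k=T/h$ and the prefactor $(2\gamma h)^\ell$ this scales like $(2\gamma T^2/h)^\ell d^\ell$, which blows up as $h\to 0$; you cannot reach the stated $h$-independent bound this way. A secondary gap is the claim that Lemma~\ref{lem:apriori} together with non-expansiveness of $O_h(0)$ yields $\wnorm{\tilde Z_{t_k}}^2\le 9\wnorm{z}^2$: Lemma~\ref{lem:apriori} concerns the pure rRKN chain, and interleaving contractions between steps is not the same as applying a contraction at the end---you would need to redo the a~priori estimate for the damped chain.

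The paper's proof makes the opposite choice of reference: it compares $Z^u$ to the \emph{free} chain $Z^0_{t_{i+1}}=O_h(\xi_i)\circ\theta_h^{(A)}(Z^0_{t_i})$ with $\theta_h^{(A)}(x,v)=(x+hv,v)$, i.e., the chain with the \emph{same noise but no force}. Because $Z^0$ is a discretized Ornstein--Uhlenbeck process, it has an explicit representation and one gets the almost-sure bound $\wonenorm{Z^0_{t_k}}\le \tfrac32\big(\wonenorm{z}+\sqrt{2\gamma}L^{-1/2}\sup_{t\le T}|\int_0^t e^{\gamma s}dB_s|\big)$. A Gr\"onwall comparison in the $\wonenorm{\cdot}$ norm then gives $\wonenorm{Z^u_{t_k}}\le C\sup_j\wonenorm{Z^0_{t_j}}$ \emph{pointwise}, so that \emph{all} the randomness is funneled into the genuine martingale $M_t=\int_0^t e^{\gamma s}dB_s^1$, to which BDG applies and yields the $h$-free factor $C_\ell T^{\ell-1}\tfrac{e^{2\ell\gamma T}-1}{2\ell\gamma}$. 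The key structural point you are missing is that the reference should absorb the noise (so that the difference is controlled by the deterministic force), not the force (which leaves the noise to be handled through a nonlinear map).
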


The remaining  lemmas needed to prove Theorem~\ref{thm:rRKN_L2_accuracy} and Theorem~\ref{thm:ukLa_L2_accuracy} follow.

\begin{lemma}[Lipschitz continuity of exact flow] Suppose \ref{B1}-\ref{B2} hold.  For all $(x,v), (\tilde{x},\tilde{v})  \in \mathbb{R}^{2d}$, and for all $T>0$ such that $L T^2 \le 1/4$, \begin{equation}
\sup_{0 \le s \le T} \wnorm{\varphi_s(x,v) - \varphi_s(\tilde{x},\tilde{v})}^2 \ \le \ (1+6 L^{1/2} T) \wnorm{(x,v) - (\tilde{x},\tilde{v})}^2 \;. 
\end{equation}
Here, $\varphi_t(x,v)$ denotes the exact flow of \eqref{exact}.  
\label{lem:lipschitz_exact_flow}
\end{lemma}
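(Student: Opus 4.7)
The plan is to derive a Grönwall-type differential inequality for the weighted squared distance between two exact flow trajectories, then apply it and convert the resulting exponential bound to the linear form required by the statement.

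First I would fix two initial conditions $(x,v), (\tilde x,\tilde v) \in \mathbb{R}^{2d}$ and denote the corresponding exact trajectories by $(q_s,v_s)$ and $(\tilde q_s,\tilde v_s)$. Introduce the differences $\Delta q_s = q_s - \tilde q_s$ and $\Delta v_s = v_s - \tilde v_s$, which satisfy $\dot{\Delta q}_s = \Delta v_s$ and $\dot{\Delta v}_s = \mathbf{F}(q_s) - \mathbf{F}(\tilde q_s)$, so that by~\ref{B2} we have $|\dot{\Delta v}_s| \le L|\Delta q_s|$. Define
\[
E(s) \ := \ \wnorm{\varphi_s(x,v) - \varphi_s(\tilde x,\tilde v)}^2 \ = \ |\Delta q_s|^2 + L^{-1}|\Delta v_s|^2 \;.
\]

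Next I would differentiate $E$ directly. Using the ODEs and the gradient-Lipschitz bound,
\[
\frac{d}{ds} E(s) \ = \ 2\langle \Delta q_s, \Delta v_s\rangle + 2 L^{-1} \langle \Delta v_s, \mathbf{F}(q_s) - \mathbf{F}(\tilde q_s) \rangle \ \le \ 4\, |\Delta q_s|\, |\Delta v_s| \;.
\]
The key step is the weighted Young inequality $2|\Delta q_s|\,|\Delta v_s| = 2 (L^{1/2}|\Delta q_s|)(L^{-1/2}|\Delta v_s|) \le L^{1/2}(|\Delta q_s|^2 + L^{-1}|\Delta v_s|^2) = L^{1/2} E(s)$, which is exactly the choice of weights in $\wnorm{\cdot}$ that makes $L$ factor cleanly. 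This yields $\frac{d}{ds}E(s) \le 2 L^{1/2} E(s)$, whence Grönwall's inequality gives $E(s) \le E(0)\, e^{2 L^{1/2} s}$ for all $s \in [0,T]$.

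Finally, I would convert the exponential bound to a linear one. Since $L T^2 \le 1/4$ implies $2 L^{1/2} s \le 2 L^{1/2} T \le 1$ for $s \in [0,T]$, I can apply the elementary inequality $e^x \le 1 + 3x$ valid for $x \in [0,1]$ (check: both sides agree at $x=0$, the right side grows at rate $3$ while $e^x$ grows at rate at most $e<3$ on $[0,1]$). This gives $e^{2L^{1/2} s} \le 1 + 6 L^{1/2} T$ uniformly in $s \in [0,T]$, and taking the supremum yields the stated bound.

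No real obstacle is expected; the only subtlety is the weighting in the norm, which is why the statement places the factor $L^{-1}$ on the velocity component and $L^{1/2}$ (rather than $L$) appears in the Grönwall rate. The same Young-inequality trick will reappear later in the one-step bias and mean-squared error lemmas, so it is worth isolating here.
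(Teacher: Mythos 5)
Your proof is correct and reaches the stated bound, but via a genuinely different route than the paper. You run a differential Gr\"{o}nwall argument: compute $\frac{d}{ds}E(s)$, bound the two inner products using \ref{B2}, absorb everything into $2L^{1/2}E(s)$ by a weighted Young inequality, integrate to get $E(s)\le E(0)e^{2L^{1/2}s}$, and then linearize the exponential with $e^{\mathsf{x}}\le 1+3\mathsf{x}$ on $[0,1]$ (which needs $LT^2\le 1/4$ to make the exponent $2L^{1/2}T\le 1$). The paper instead integrates the ODEs, expands $\wnorm{\varphi_t(x,v)-\varphi_t(\tilde x,\tilde v)}^2$ into the initial squared distance plus a cross term and a squared-integral term, bounds both with Cauchy--Schwarz and Young against $\sup_{0\le s\le t}\wnorm{\varphi_s(x,v)-\varphi_s(\tilde x,\tilde v)}^2$, and then solves the resulting self-referential inequality algebraically, finishing with $(1+2\mathsf{x})/(1-\mathsf{x})\le 1+6\mathsf{x}$ on $[0,1/2]$. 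Both routes use the smallness condition $L^{1/2}T\le 1/2$, once for the exponential linearization and once to keep the denominator $1-L^{1/2}t$ positive; your version is arguably more elementary and shorter, while the paper's avoids differentiating $E(s)$ and so generalizes more naturally to the discrete (one-step) estimates used elsewhere.

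One small typo worth fixing: in the line ``$2|\Delta q_s|\,|\Delta v_s| = 2(L^{1/2}|\Delta q_s|)(L^{-1/2}|\Delta v_s|) \le L^{1/2}(|\Delta q_s|^2 + L^{-1}|\Delta v_s|^2)$'' the weights in the middle expression should be $L^{1/4}$ and $L^{-1/4}$, since Young's $2ab\le a^2+b^2$ applied to $a=L^{1/4}|\Delta q_s|$, $b=L^{-1/4}|\Delta v_s|$ is what yields $L^{1/2}|\Delta q_s|^2 + L^{-1/2}|\Delta v_s|^2 = L^{1/2}E(s)$. The end-to-end inequality you assert is correct, only the intermediate factorization is mis-stated.
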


\begin{lemma}[Local bias of rRKN integator]  Suppose \ref{B1}-\ref{B3} hold.  Let $h>0$ satisfy $L h^2 \le 1/4$. Then for all $(x,v) \in \mathbb{R}^{2d}$, the following bound holds \begin{equation}
\begin{aligned}
& \wnorm{E (\tilde{Q}_h(x,v),\tilde{V}_h(x,v)) - (q_h(x,v),v_h(x,v)) }^2 \\ 
& \qquad  \le \ \frac{2}{9} \left[  ( L h^2 )^4 |x|^2 + ( L h^2 )^3 h^2  |v|^2  + 3 ( L_H h^3 )^2 (L h^2) L |x|^4 + 4 (L_H h^3)^2 h^2 |v|^4 \right]   \;. 
\end{aligned}
\end{equation}
Here, $(q_h(x,v),v_h(x,v))$ and $(\tilde{Q}_h(x,v),\tilde{V}_h(x,v))$ are defined by \eqref{exact} and \eqref{eq:rRKN2pt5}, respectively.
\label{lem:rRKN_local_mean_error}
\end{lemma}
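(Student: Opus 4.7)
The plan is to exploit the density $f(u) = 2u$ of $\mathcal{U}$ to rewrite the first moments of the rRKN step as integrals over $[0,h]$, and then compare them term-by-term with the Picard integral form of the exact Hamiltonian flow. The key identity is $E[g(\mathcal{U})/\mathcal{U}] = 2\int_0^1 g(u)\,du$, valid for any integrable $g$ because the $2u$ density cancels the $1/\mathcal{U}$ factor. Applied with $g(\mathcal{U}) = F^+ - F^-$, followed by the change of variables $s = \mathcal{U} h$, this yields
\begin{equation*}
E\!\left[\tfrac{1}{\mathcal{U}}(F^+ - F^-)\right] \,=\, \tfrac{2}{h} \int_0^h [F(y(s)) - F^-]\,ds,
\qquad y(s) := x + sv + \tfrac{s^2}{2} F^-.
\end{equation*}
Substituting into \eqref{eq:rRKN2pt5} collapses the first moments to $E[\tilde V_h] = v + \int_0^h F(y(s))\,ds$ and $E[\tilde Q_h] = x + hv + \tfrac{h^2}{2} F^- + \tfrac{h}{3} \int_0^h [F(y(s)) - F^-]\,ds$. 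Compared with the Picard forms $v_h = v + \int_0^h F(q_s)\,ds$ and $q_h = x + hv + \int_0^h (h-s) F(q_s)\,ds$ of the exact flow, one arrives at the compact representations
\begin{align*}
E[\tilde V_h] - v_h &\,=\, \int_0^h [F(y(s)) - F(q_s)]\,ds, \\
E[\tilde Q_h] - q_h &\,=\, \tfrac{h}{3} \int_0^h [F(y(s)) - F^-]\,ds - \int_0^h (h-s)[F(q_s) - F^-]\,ds.
\end{align*}

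For the velocity bias, the strategy is to use $q_s - y(s) = \int_0^s (s-r)[F(q_r) - F^-]\,dr$, combined with $L$-Lipschitz continuity of $F$ and the a priori bound \eqref{apriori:2a} (via $|q_r - x| \le r \sup_{\rho \le r} |v_\rho|$), to obtain $|q_s - y(s)| \le (L/6)(2|v| + L^{1/2}|x|)\,s^3$. One more application of $L$-Lipschitz continuity of $F$ delivers an $O(L^2 h^4)$ bound on $|E[\tilde V_h] - v_h|$ that depends only on $L$, which will be absorbed into the first two terms of the claimed inequality after taking the weighted $\ell^2$-norm.

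For the position bias, I will decompose $F(\zeta) - F^- = \nabla F(x)(\zeta - x) + R(\zeta)$ for $\zeta \in \{y(s), q_s\}$, with the Hessian-Lipschitz remainder bound $|R(\zeta)| \le (L_H/2) |\zeta - x|^2$. The critical algebraic cancellation is $\tfrac{h}{3}\int_0^h s\,ds = \int_0^h (h-s) s\,ds = h^3/6$, which kills the leading $\nabla F(x)\cdot sv$ contribution between the two integrals. What survives are: (i) a residual $(h^4/72)\,\nabla F(x) F^-$ arising from the quadratic-in-$s$ part of $y(s)-x$, bounded via $|\nabla F(x) F^-| \le L^2 |x|$ thanks to $F(0) = 0$; (ii) a correction $-\int_0^h (h-s)\,\nabla F(x)(q_s - y(s))\,ds$ controlled by the estimate from the previous paragraph; and (iii) two Taylor remainder integrals, bounded using $|y(s)-x|^2 \le 2s^2|v|^2 + (s^4/2) L^2 |x|^2$ and $|q_s - x|^2 \le s^2(4|v|^2 + L|x|^2)$, which produce contributions of order $L_H h^4 |v|^2$ and $L_H L h^4 |x|^2$.

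The main obstacle is bookkeeping: after the term-by-term bounds on $|E[\tilde Q_h] - q_h|$ and $|E[\tilde V_h] - v_h|$, I will apply a Young-type splitting such as $(\sum_{i} a_i)^2 \le n\sum_{i} a_i^2$ to reduce $|E[\tilde Q_h] - q_h|^2 + L^{-1}|E[\tilde V_h] - v_h|^2$ to four contributions scaling as $|x|^2$, $|v|^2$, $|x|^4$, and $|v|^4$, and then verify that each numerical constant sits comfortably below the corresponding prefactor from $\tfrac{2}{9}[\cdots]$. A subtle structural point worth emphasizing is that the exact integral identity established in the first paragraph is what keeps the $L_H$-dependent contributions at $O(h^4)$: a more direct route through the third-order Taylor remainder of $F^+$ would require a bound on $D^3 F$, which is not assumed.
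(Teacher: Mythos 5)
Your proposal is correct and captures the same core strategy as the paper: use the $2u$ density to write $E[(F^+-F^-)/\mathcal U]$ as an exact integral, compare with the Picard (second-order Duhamel) form of the Hamiltonian flow, exploit the $h^3/6$ cancellation of the leading $\nabla F(x)\cdot sv$ contributions, and control what survives by the gradient- and Hessian-Lipschitz hypotheses together with the a priori velocity bound \eqref{apriori:2a}. Where you diverge from the paper is purely in how the surviving terms are organized. The paper first applies the fundamental theorem of calculus and Fubini to \emph{both} $\int_0^h [F(y(s))-F^-]\,ds$ and $\int_0^h [F(q_s)-F^-]\,ds$, recasting each as a single integral of $\mathbf H(\cdot)\cdot(\text{velocity})$, and then splits the difference into four position pieces $\rn{1}_a,\ldots,\rn{1}_d$ and two velocity pieces $\rn{2}_a,\rn{2}_b$; in particular the paper's $\rn{2}_a$ invokes the Hessian-Lipschitz constant $L_H$ even for the velocity bias. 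Your route for the velocity bias is slightly cleaner: you bound $|E[\tilde V_h]-v_h|\le L\int_0^h|y(s)-q_s|\,ds$ and estimate $|q_s-y(s)|=O(s^3)$ directly via the Duhamel representation $q_s-y(s)=\int_0^s(s-r)[F(q_r)-F^-]\,dr$, which uses only $L$-Lipschitz continuity and yields a pure $L$-dependent $O(h^8)$ contribution, matching the paper's first two terms. For the position bias you deploy the first-order Taylor expansion with Hessian-Lipschitz remainder rather than the paper's FTC-then-split, but the surviving pieces (the $h^4/72\,\nabla F(x)F^-$ residual, the $\nabla F(x)(q_s-y(s))$ correction, and the two Taylor remainder integrals) are the same up to relabeling. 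The only part you have not carried through is the constant-chasing to confirm the $\tfrac{2}{9}[\cdots]$ prefactors, but the structure gives ample room (your velocity contribution alone has coefficient $\tfrac{1}{72}$ on the $L^3h^8|v|^2$ term versus the budget of $\tfrac{2}{9}$), so the plan is sound.
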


\begin{lemma}[Local $L^2$-error of rRKN integator]  Suppose \ref{B1}-\ref{B3} hold.   Let $h>0$ satisfy $L h^2 \le 1/4$.  Then for all $(x,v) \in \mathbb{R}^{2d}$, the following bound holds almost surely \begin{equation}
\begin{aligned}
&  \wnorm{(\tilde{Q}_h(x,v),\tilde{V}_h(x,v)) - (q_h(x,v),v_h(x,v)) }^2  \\
 & \qquad  \ \le \ \frac{7}{4} \left[ (L^{1/2} h)^6 \left(  |x|^2 +  L^{-1} |v|^2 \right) +L  (L_H h^3)^2   \left(|x|^4 + 10  L^{-2} |v|^4\right) \right]     \;. 
 \end{aligned}
\end{equation}
Here, $(q_h(x,v),v_h(x,v))$ and $(\tilde{Q}_h(x,v),\tilde{V}_h(x,v))$ are defined by \eqref{exact} and \eqref{eq:rRKN2pt5}, respectively.
\label{lem:rRKN_local_L2_error}
\end{lemma}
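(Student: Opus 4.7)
The plan is to write the error as a difference of correction terms and then Taylor-expand those corrections with Hessian-Lipschitz remainders. Using the integral form of the exact flow, namely $q_h = x + h v + \tfrac{h^2}{2}\mathbf{F}(x) + \int_0^h (h-s)[\mathbf{F}(q_s) - \mathbf{F}(x)]\,ds$ and $v_h = v + h \mathbf{F}(x) + \int_0^h [\mathbf{F}(q_s) - \mathbf{F}(x)]\,ds$, and observing that the rRKN scheme \eqref{eq:rRKN2pt5} shares the Verlet-like leading terms $x + hv + \tfrac{h^2}{2}\mathbf{F}(x)$ and $v + h\mathbf{F}(x)$, the problem reduces to bounding, almost surely,
\begin{align*}
\tilde{Q}_h - q_h &= \tfrac{h^2}{6\mathcal{U}}(\mathbf{F}^+ - \mathbf{F}^-) - \int_0^h (h-s)[\mathbf{F}(q_s) - \mathbf{F}(x)]\,ds, \\
\tilde{V}_h - v_h &= \tfrac{h}{2\mathcal{U}}(\mathbf{F}^+ - \mathbf{F}^-) - \int_0^h [\mathbf{F}(q_s) - \mathbf{F}(x)]\,ds.
\end{align*}

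Next I would apply a second-order Taylor expansion with Hessian-Lipschitz remainder to write $\mathbf{F}^+ - \mathbf{F}^- = \nabla\mathbf{F}(x) y_{\mathcal{U}} + R^+$, where $y_{\mathcal{U}} = \mathcal{U} h v + \tfrac{1}{2}(\mathcal{U} h)^2 \mathbf{F}(x)$ and $|R^+| \le (L_H/2)|y_{\mathcal{U}}|^2$, and $\mathbf{F}(q_s) - \mathbf{F}(x) = \nabla\mathbf{F}(x)(q_s - x) + R_s$ with $|R_s| \le (L_H/2)|q_s-x|^2$. The key observation is that the dominant linear-in-$v$ parts match exactly on the two sides of each error: $\int_0^h (h-s)\nabla\mathbf{F}(x)(sv)\,ds = \tfrac{h^3}{6}\nabla\mathbf{F}(x)v$, which equals $\tfrac{h^2}{6\mathcal{U}}\nabla\mathbf{F}(x)(\mathcal{U}hv)$, and similarly for the velocity error. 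After these cancellations what survives falls into two categories: (i) subleading terms involving $\nabla\mathbf{F}(x)\mathbf{F}(x)$ (of order $h^4$ in position and $h^3$ in velocity), which I would bound pointwise using $\opnorm{\nabla\mathbf{F}(x)} \le L$ and $|\mathbf{F}(x)| \le L |x|$; and (ii) the Hessian-Lipschitz remainders $R^+$ and $R_s$, which I would bound using $|y_{\mathcal{U}}|^2 \le 2(\mathcal{U}h)^2 |v|^2 + \tfrac{1}{2}(\mathcal{U}h)^4 L^2|x|^2$ and $|q_s-x|^2 \le 2 s^2 |v|^2 + \tfrac{1}{2} s^4 L^2 |x|^2$ (the latter coming from the continuous-time analog of \eqref{apriori:2a}).

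Assembling the bounds, the type-(i) contributions combine to a pure Lipschitz error squared of order $L^3 h^6(|x|^2 + L^{-1}|v|^2)$ in the weighted norm, matching the $(L^{1/2}h)^6$ term in the statement. The type-(ii) contributions produce the Hessian-Lipschitz term: squaring the position remainder $\int_0^h (h-s) R_s\,ds$ (of size $L_H h^4 |v|^2 + L_H L^2 h^6 |x|^2$) and the velocity remainder $\int_0^h R_s\,ds$ (of size $L_H h^3|v|^2 + L_H L^2 h^5 |x|^2$), together with the analogous bounds on the rRKN-side terms $\tfrac{h^2}{6\mathcal{U}}R^+$ and $\tfrac{h}{2\mathcal{U}}R^+$, gives the $L(L_H h^3)^2(|x|^4 + L^{-2}|v|^4)$-type contribution. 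The apparent singularity $1/\mathcal{U}$ is harmless because $|R^+|$ vanishes like $\mathcal{U}^2$, which is what makes the bound almost-sure rather than only-in-expectation; the coefficient $10$ on $L^{-2}|v|^4$ tracks the dominant $|v|^4$-piece of the squared velocity error after division by $L$ in the weighted norm. The main obstacle is the careful bookkeeping needed to land on the precise constants $7/4$ and $10$: many small contributions from the exact and rRKN sides must be combined, and $L h^2 \le 1/4$ must be used consistently to absorb cross terms and higher-order-in-$h$ remainders into the two dominant terms stated in the bound.
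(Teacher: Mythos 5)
Your proposal captures the essential mechanism of the paper's proof and would, with careful bookkeeping, lead to a bound of the right form. The paper organizes the argument slightly differently: after the fundamental theorem of calculus is applied to $\mathbf{F}^+ - \mathbf{F}^-$ (re-parameterized so the $1/\mathcal{U}$ is absorbed into the integration variable) and integration by parts is applied to the exact integrals (giving $\int_0^h \frac{(h-s)^2}{2}\mathbf{H}(x_s)v_s\,ds$ and $\int_0^h (h-s)\mathbf{H}(x_s)v_s\,ds$), the velocity error $\rn{2}$ is decomposed into four pieces $\rn{2}_a,\dots,\rn{2}_d$ whose central cancellation is exactly the one you identify, namely $\frac{h}{2}\int_0^h \mathbf{H}(x)v\,ds = \int_0^h(h-s)\mathbf{H}(x)v\,ds$. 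For the position error $\rn{1}$, however, the paper simply bounds each side separately via $(a-b)^2 \le 2a^2 + 2b^2$; the extra factor of $h$ in the position prefactors ($h^2/6\mathcal{U}$ vs.~$h/2\mathcal{U}$, $(h-s)^2/2$ vs.~$h-s$) already gives the right order $(Lh^2)^3$ without cancellation. So your claim that the linear-in-$v$ cancellation is essential for both components is a bit stronger than needed; it is only essential for the velocity part. You correctly identify the role of $\mathcal{U}\le 1$ in making the bound almost sure (the paper flags this explicitly), and your treatment of the $1/\mathcal{U}$ singularity via $|R^+| = O(\mathcal{U}^2)$ is a valid alternative to the paper's substitution trick. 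One small inaccuracy worth flagging: your stated estimate $|q_s - x|^2 \le 2s^2|v|^2 + \frac{1}{2}s^4 L^2|x|^2$ tacitly uses $|\mathbf{F}(q_r)|\le L|x|$ rather than $|\mathbf{F}(q_r)|\le L|q_r|$; the correct route is $|q_s-x|\le s\sup_{r\le s}|v_r|$ combined with the a priori velocity bound $\sup|v_r|^2 \le 4|v|^2 + L|x|^2$ from \eqref{apriori:2a}, which is what the paper uses and yields somewhat larger coefficients. This does not break the argument, but it would change your intermediate constants.
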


%By combining Lemmas~\ref{lem:rRKN_local_mean_error} and~\ref{lem:rRKN_local_L2_error}, we obtain the following local variance bound.  The result shows that the local variance is dominated by the local $L^2$-error of the scheme.   Theorem~\ref{thm:abs} then implies that the global error is primarily determined by the size of these error terms.
\begin{comment}
\begin{lemma}[Local variance of rRKN integator]  Suppose \ref{B1}-\ref{B3} hold.   Let $h>0$ satisfy $L h^2 \le 1/4$.  Then for all $(x,v) \in \mathbb{R}^{2d}$, the following bound holds \begin{equation}
\begin{aligned}
&  E\left( \wnorm{(\tilde{Q}_h(x,v),\tilde{V}_h(x,v)) - E (\tilde{Q}_h(x,v),\tilde{V}_h(x,v)) }^2 \right) \\
 & \qquad  \ \le \ \frac{4}{3} \big( (L h^2)^3  |x|^2 + (L h^2)^3 ( h^2 + L^{-1} ) |v|^2 \\
 & \qquad \qquad + (L_H h^3)^2  L |x|^4 + 12 (L_H h^3)^2  L^{-1} |v|^4 \big)   \;. 
 \end{aligned}
\end{equation}
Here, $(q_h(x,v),v_h(x,v))$ and $(\tilde{Q}_h(x,v),\tilde{V}_h(x,v))$ are defined by \eqref{exact} and \eqref{eq:rRKN2pt5}, respectively.
\label{lem:rRKN_local_variance}
\end{lemma}
\end{comment}

\subsection{Proofs of \texorpdfstring{$L^2$}{L2}-accuracy Theorems}

\label{sec:proofs}

\begin{proof}[Proof of Theorem~\ref{thm:rRKN_L2_accuracy}] 
For any $k \in \mathbb{N}_0$, let $\mathcal{F}_{t_k}$ denote the sigma-algebra of events up to  $t_k$ generated by the i.i.d.~sequence  $(\mathcal{U}_i )_{i \in \mathbb{N}_0}$ up to step $k$.  For the proof, it is helpful to recall that $h > 0$ is a time step size, $(q_{t_{k}}, v_{t_{k}})$ and $(\tilde{Q}_{t_{k}}, \tilde{V}_{t_{k}} )$ are defined by \eqref{exact} and \eqref{eq:rRKN2pt5} respectively, and that $\varphi_h$ denotes the exact flow of \eqref{exact}.   

To estimate the global $L^2$-error, we begin by rewriting the squared $L^2$-error over one time integration step by introducing a zero $ - \varphi_h(\tilde{Q}_{t_k}, \tilde{V}_{t_k}) + \varphi_h(\tilde{Q}_{t_k}, \tilde{V}_{t_k})$, applying the tower property of the  conditional expectation, and expanding as follows: \begin{align*}
 \rho_{t_{k+1}}^2 \ &:= \  E \left( \wnorm{(\tilde{Q}_{t_{k+1}}, \tilde{V}_{t_{k+1}} ) - (q_{t_{k+1}}, v_{t_{k+1}})  }^2 \right)   \nonumber \\
&  \ = \  E \left( \wnorm{(\tilde{Q}_{t_{k+1}}, \tilde{V}_{t_{k+1}} ) - \varphi_h(\tilde{Q}_{t_k}, \tilde{V}_{t_k}) + \varphi_h(\tilde{Q}_{t_k}, \tilde{V}_{t_k}) - (q_{t_{k+1}}, v_{t_{k+1}})  }^2 \right)  \nonumber \\
& \ = \ 
E \left( \wnorm{(\tilde{Q}_{t_{k+1}}, \tilde{V}_{t_{k+1}} ) - \varphi_h(\tilde{Q}_{t_k}, \tilde{V}_{t_k}) }^2 \right)  + E \left( \wnorm{\varphi_h(\tilde{Q}_{t_k}, \tilde{V}_{t_k}) - (q_{t_{k+1}}, v_{t_{k+1}})  }^2 \right)  \nonumber \\
& \qquad + 2 E \left[ \left\langle E \left( (\tilde{Q}_{t_{k+1}}, \tilde{V}_{t_{k+1}} ) \mid \mathcal{F}_{t_k} \right) - \varphi_h(\tilde{Q}_{t_k}, \tilde{V}_{t_k})    , \varphi_h(\tilde{Q}_{t_k}, \tilde{V}_{t_k}) - (q_{t_{k+1}}, v_{t_{k+1}})  \right\rangle_w \right] \nonumber \\
& \ \le \   
E \left( \wnorm{(\tilde{Q}_{t_{k+1}}, \tilde{V}_{t_{k+1}} ) - \varphi_h(\tilde{Q}_{t_k}, \tilde{V}_{t_k}) }^2 \right)  + E \left( \wnorm{\varphi_h(\tilde{Q}_{t_k}, \tilde{V}_{t_k}) - (q_{t_{k+1}}, v_{t_{k+1}})  }^2 \right)  \nonumber \\
& \qquad + 2 E \left[  \wnorm{E \left( (\tilde{Q}_{t_{k+1}}, \tilde{V}_{t_{k+1}} ) \mid \mathcal{F}_{t_k} \right) - \varphi_h(\tilde{Q}_{t_k}, \tilde{V}_{t_k}) }   \wnorm{ \varphi_h(\tilde{Q}_{t_k}, \tilde{V}_{t_k}) - (q_{t_{k+1}}, v_{t_{k+1}})}   \right] \nonumber 
\;.
\end{align*}
where in the last step we applied the Cauchy-Schwarz inequality to the cross term. Using Young's inequality for products with $\varepsilon = L^{1/2} h$, we obtain the following bound:  \begin{align}  \rho_{t_{k+1}}^2   \ \le \  \rn{1} + \rn{2} + \rn{3}  \quad  
\label{L2_error_decomposition} 
\end{align}
where we have introduced
\begin{align}
\rn{1}  \ &:= \  (1 + L^{1/2} h) \ E \wnorm{\varphi_h(\tilde{Q}_{t_k}, \tilde{V}_{t_k})- \varphi_h(q_{t_k}, v_{t_k})}^2 \;, \nonumber \\
\rn{2}  \ &:= \ \frac{1}{L^{1/2} h} \, E \wnorm{E\left( (\tilde{Q}_{t_{k+1}}, \tilde{V}_{t_{k+1}}) \mid \mathcal{F}_{t_k} \right) - \varphi_h(\tilde{Q}_{t_k}, \tilde{V}_{t_k}) }^2 \;, \nonumber \\
\rn{3}  \ &:= \
E \wnorm{ (\tilde{Q}_{t_{k+1}}, \tilde{V}_{t_{k+1}}) - \varphi_h(\tilde{Q}_{t_k}, \tilde{V}_{t_k}) }^2 \;. \nonumber 
\end{align}
  By Lemma~\ref{lem:lipschitz_exact_flow} on the Lipschitz continuity of the exact flow,
\begin{align}
\rn{1} \ \le \ (1+ L^{1/2} h) (1+6 L^{1/2} h) \,  \rho_{t_k}^2  \overset{(L^{1/2} h \le 1/2)}{\le}  (1+ 10 L^{1/2} h) \, \rho_{t_k}^2 \;. \label{L2:1} 
\end{align}
By Lemma~\ref{lem:rRKN_local_mean_error} on the local bias of the rRKN integrator, and using $L^{1/2} h \le 1/2$,
\begin{align}
\rn{2} \ \le \
\frac{1}{9} E \left[  ( L^{1/2} h )^6 (|\tilde{Q}_{t_k}|^2 + L^{-1} |\tilde{V}_{t_k}|^2 )  +  ( L_H h^3 )^2 L ( 3 |\tilde{Q}_{t_k}|^4 + 4  L^{-2} |\tilde{V}_{t_k}|^4  ) \right]
\;. \label{L2:2} 
\end{align}
By Lemma~\ref{lem:rRKN_local_L2_error} on the local $L^2$-error of the rRKN integrator, 
\begin{equation}
\begin{aligned}
&\rn{3} \ \le \ \frac{7}{4} E \left[ (L^{1/2} h)^6  (|\tilde{Q}_{t_k}|^2 + L^{-1} |\tilde{V}_{t_k}|^2) \right]  + 10 L  (L_H h^3)^2   E\left[ (  |\tilde{Q}_{t_k}|^4 +   L^{-2} |\tilde{V}_{t_k}|^4 )  \right]  \;. 
\end{aligned}
\label{L2:3}
\end{equation}
Combining \eqref{L2:1}, \eqref{L2:2}, and \eqref{L2:3}, we get: \begin{align}
  &   \rho_{t_{k+1}}^2 \ \le \ (1+ 10 \, L^{1/2} \, h) \, \rho_{t_k}^2 + 2 \, (L^{1/2} h)^6  \, \mathfrak{M}_{2}  + 18 \, L \, (L_H h^3)^2 \, \mathfrak{M}_{4}  \;, \nonumber  
\end{align}
where we have introduced \begin{align*}
\mathfrak{M}_{2} \ &:= \ E\left[ \left(\max_{t_k \le T}|\tilde{Q}_{t_k}| \right)^2 + L^{-1} \left(\max_{t_k \le T} |\tilde{V}_{t_k}| \right)^2 \right] \;,  \\ \mathfrak{M}_{4} \ &:= \  E\left[ \left(\max_{t_k \le T} |\tilde{Q}_{t_k}|\right)^4 + L^{-2} \left(\max_{t_k \le T}|\tilde{V}_{t_k}| \right)^4 \right] \;.
\end{align*}
We now apply discrete Gr\"{o}nwall's inequality (Lemma~\ref{lem:groenwall}) with  \[ \lambda = 10 L^{1/2} \;, \quad a_k = \rho_{t_k}^2 \;, \quad \text{and}  \quad g_k = 2  (L h^2)^3  \mathfrak{M}_{2}  + 18 L (L_H h^3)^2 \mathfrak{M}_{4} 
\]  yielding: 
\begin{align}
\label{eq:postgroenwall}
\rho_{t_k}^2 \ &\le \ \frac{1}{10} \, \exp(10 L^{1/2} T) \, \left[ 2 \, (L^{1/2} h)^5 \, \mathfrak{M}_{2} + 18 \, L^{1/2} L_H^2 h^5 \, \mathfrak{M}_{4} \right] \;.
\end{align}
Next, we bound $\mathfrak{M}_{2}$ using  Lemma~\ref{lem:apriori}, Jensen's inequality with respect to the convex function $\mathsf{x} \mapsto \mathsf{x}^2$, and the condition $L T^2 \le 1/4$,    \begin{align}
\mathfrak{M}_{2} 
& \ \le \ \left( \frac{3}{2} (|x|+T |v|) \right)^2 + L^{-1} \left( \frac{11}{8} |v| + \frac{3}{4} L^{1/2} |x| \right)^2 \nonumber \\
& \ \le \ 2 \left(  \frac{9}{4} + \frac{9}{16} \right) |x|^2 +2 \left(  \frac{9}{4} LT^2 +  \left(\frac{11}{8} \right)^2  \right) L^{-1} |v|^2  \nonumber \\
& \ \le \ \frac{45}{8} |x|^2 + \frac{157}{32}  L^{-1} |v|^2  \ \le \  \frac{45}{8} \, \wnorm{(x,v)}^2 \;. \label{eq:2ndmombnd} 
\end{align}
 Similarly, using Lemma~\ref{lem:apriori}, Jensen's inequality with respect to the convex function $\mathsf{x} \mapsto \mathsf{x}^4$, and the condition $L T^2 \le 1/4$, we bound  $\mathfrak{M}_{4}$ as
\begin{align}
\mathfrak{M}_{4}
& \ \le \ \left( \frac{3}{2} (|x|+T |v|) \right)^4 + L^{-2} \left( \frac{11}{8} |v| + \frac{3}{4} L^{1/2} |x| \right)^4 \nonumber \\
& \ \le \ 2^3 \left[ \left( \frac{3}{2} \right)^4 (|x|^4 + T^4 |v|^4)  + \left( \frac{3}{4} \right)^4 |x|^4 + \left( \frac{11}{8} \right)^4 L^{-2} |v|^4 \right]\nonumber \\
& \ \le \ 2^3 \left[ \left(  \left( \frac{3}{2} \right)^4  + \left( \frac{3}{4} \right)^4 \right) |x|^4 + \left( \left( \frac{3}{2} \right)^4 L^2 T^4 + \left( \frac{11}{8} \right)^4 \right) L^{-2} |v|^4  \right] \nonumber \\
& \ \le \ \frac{1377}{32} \left( |x|^4 +  L^{-2} |v|^4 \right)   \;.
\label{eq:4thmombnd}
\end{align}
Finally, we insert \eqref{eq:2ndmombnd} and \eqref{eq:4thmombnd} into \eqref{eq:postgroenwall}, take square roots, and  apply the subadditivity of the square root to obtain \eqref{L2_err_rRKN}, as required.  %In particular, Lemma~\ref{lem:apriori} allows to upper bound the moments of the numerical flow for all $k$ such that $t_k \le T$, while discrete Gr\"{o}nwall's inequality allows to upper bound $\rho_{t_{k}}^2$ by the solution of the corresponding inhomogenous first-order forward difference equation.  
\end{proof}

Given analogous local bias (Lemma~\ref{lem:rRKN_local_mean_error}), local mean-squared error (Lemma~\ref{lem:rRKN_local_L2_error}) and numerical stability bounds (Lemma~\ref{lem:apriori}), this proof can be straightforwardly adapted to establish global $L^2$-accuracy for other rRKN schemes.  Specifically, the estimates for the terms appearing in \eqref{L2_error_decomposition} follow directly from these lemmas, as shown in \eqref{L2:2}, \eqref{L2:3}, and the numerical stability bounds in \eqref{eq:2ndmombnd} and \eqref{eq:4thmombnd}.

\begin{proof}[Proof of Theorem~\ref{thm:ukLa_L2_accuracy}]
For any $k \in \mathbb{N}_0$, let $\mathcal{F}_{t_k}$ denote the sigma-algebra of events up to  $t_k$ generated by the independent i.i.d.~sequences  $(\xi_i )_{i \in \mathbb{N}_0}$ and $(\mathcal{U}_i )_{i \in \mathbb{N}_0}$ up to step $k$. 
For any $z = (x,v) \in \mathbb{R}^{2 d}$ and $k \in \mathbb{N}_0$, let $Z^e_{t_k}(z) = (X_{t_{k}}^e(z), V_{t_{k}}^e(z))$ and $Z^u_{t_k}(z) = (X_{t_{k}}^u(z), V_{t_{k}}^u(z)) $ be defined by \eqref{OX} and \eqref{OrRKN}, respectively.  Since the OU-substep is a contraction in  $\wnorm{}$, \begin{align}  \rho_{t_{k+1}}^2 \ &:= \ E \left( \wnorm{Z_{t_{k+1}}^e(z)  - Z_{t_{k+1}}^u(z)  }^2 \right) \nonumber \\
\ &= \ E \left( \wnorm{O_h(\xi_k) \circ \varphi_h(Z_{t_{k}}^e(z) ) - O_h(\xi_k) \circ \Theta_h(\mathcal{U}_k) ( Z_{t_{k}}^u(z) )  }^2 \right)  \nonumber \\
\ &\le \ E \left( \wnorm{\varphi_h(Z_{t_{k}}^e(z) ) -  \Theta_h(\mathcal{U}_k) ( Z_{t_{k}}^u(z) )  }^2 \right)  
\ \le \  \rn{1} + \rn{2} + \rn{3}  \quad \text{where} 
\nonumber \\
\rn{1}  \ &:= (1 + L^{1/2} h) \ E \wnorm{\varphi_h(Z^e_{t_k}(z))- \varphi_h(Z_{t_k}^u(z))}^2 \;, \nonumber \\
\rn{2}  \ &:= \ \frac{1}{L^{1/2} h} \, E \wnorm{E\left( \Theta_h(\mathcal{U}_k)(Z_{t_k}^u(z)) \mid \mathcal{F}_{t_k} \right) - \varphi_h(Z_{t_k}^u(z) ) }^2 \;, \nonumber \\
\rn{3}  \ &:= \
E \wnorm{ \Theta_h(\mathcal{U}_k)(Z_{t_k}^u(z)) - \varphi_h(Z_{t_k}^u(z)) }^2 \;. \nonumber 
\end{align}
  By Lemma~\ref{lem:lipschitz_exact_flow} on the Lipschitz continuity of the exact flow,
\begin{align}
\rn{1} \ \le \ (1+ L^{1/2} h) (1+6 L^{1/2} h) \, \rho_{t_k}^2  \overset{(L^{1/2} h \le 1/2)}{\le}  (1+ 10 L^{1/2} h) \, \rho_{t_k}^2 \;. \label{L2::1} 
\end{align}
By Lemma~\ref{lem:rRKN_local_mean_error} on the local mean error of the rRKN integrator, and using the condition $L^{1/2} h \le 1/2$, we have
\begin{align}
\rn{2} \ &\le \   
\frac{1}{9} E \left(  ( L h^2 )^3 \wnorm{Z^u_{t_k}}^2 + 4 ( L_H h^3 )^2 L ( |X^u_{t_k}|^4 + L^{-2} |V^u_{t_k}|^4  ) \right) 
\;. \label{L2::2} 
\end{align}
By Lemma~\ref{lem:rRKN_local_L2_error} on the local $L^2$-error of the rRKN integrator, 
\begin{align}
\rn{3} \ &\le \ \frac{7}{4} E \left( (L h^2)^3 \wnorm{Z^u_{t_k}}^2 + 10 (L_H h^3)^2  L ( |X^u_{t_k}|^4 +  L^{-2} |V^u_{t_k}|^4 )  \right)  \;. \label{L2::3}
\end{align}
Combining \eqref{L2::1} - \eqref{L2::3} yields \begin{align}
  &   \rho_{t_{k+1}}^2 \ \le \ (1+ 10 L^{1/2} h) \, \rho_{t_k}^2  + 2  (L h^2)^3  \, E \left(\wnorm{Z_{t_k}^u}^2 \right)  +  18 L (L_H h^3)^2   \, E \left(\wnorm{Z_{t_k}^u}^4 \right) \;. \nonumber  
\end{align}
Applying, in turn, Lemma~\ref{lem:apriori_ukLa},  Lemma~\ref{lem:groenwall}, and $L h^2 \le L T^2 \le 1/4$, and then simplifying gives the required result.
\end{proof}

The proofs of Theorem~\ref{thm:rRKN_L2_accuracy} and Theorem~\ref{thm:ukLa_L2_accuracy} share the same components:  local bias (Lemma~\ref{lem:rRKN_local_mean_error}), local mean-squared error (Lemma~\ref{lem:rRKN_local_L2_error}) and numerical stability bounds (Lemma~\ref{lem:apriori} or Lemma~\ref{lem:apriori_ukLa}).

\subsection{Proofs of Lemmas}

\begin{proof}[Proof of Lemma~\ref{lem:apriori_ukLa}]
This proof involves a careful comparison of the ukLa chain to the corresponding free chain defined for any $z = (x,v) \in \mathbb{R}^{2d}$ and $i \in \mathbb{N}_0$ by the recurrence relation
\[
Z^0_{t_{i+1}}(z) \ = \ O_h(\xi_i) \circ \theta_h^{(A)}( Z^0_{t_i}(z) ) \;, \quad Z^0_{t_0}(z) \ =  \  z  \ \in \ \mathbb{R}^{2 d} \;.  
\]  where $\theta_h^{(A)} (z) := (x + h v, v)$.  The rationale for this comparison is that the moments of the free chain can be estimated more explicitly. To see this, we introduce the following weighted $\ell_1$-norm: \[
\wonenorm{(x,v)} \ = \ |x| + L^{-1/2} |v| \;, \quad (x,v) \in \mathbb{R}^{2d} \;,
\] 
and note that $\{Z^0_{t_i}(z) \}_{i \in \mathbb{N}_0} = \{(X^0_{t_i}(z), V^0_{t_i}(z)) \}_{i \in \mathbb{N}_0}  $ can be interpolated by \[
d X^0_t \ = \ V^0_{\lfloor t \rfloor} dt \;, \qquad d V^0_t \ = \ - \gamma V^0_t + \sqrt{2 \gamma} d B_t \;, 
\]
where $\lfloor t \rfloor \, := \,  \sup \{ s\in h\Z \ : \ s\leq t \}$ and $(X^0_0, V^0_0)  \, = \,  (x,v) \, \in \,  \mathbb{R}^{2d}$.   Hence, \begin{align*}
X^0_t  \ &=  \  x + \left(\int_0^t e^{-\gamma \lfloor s \rfloor} ds \right) v  + \sqrt{2 \gamma} \int_0^t \int_0^{\lfloor s \rfloor} e^{-\gamma ( \lfloor s \rfloor - r ) } d B_r ds \;, \\
V_t^0 \ &= \ e^{-\gamma t} v + \sqrt{2 \gamma} \int_0^t e^{-\gamma (t-s) } d B_s \;,
\end{align*}
and therefore, almost surely for $t \le T$, it holds that \[ |V^0_t| \le e^{-\gamma t} |v| + \sqrt{2 \gamma} e^{-\gamma t} \sup_{t \le T} \left| \int_0^t e^{\gamma s } d B_s \right| \;,  \] and hence, \begin{align} 
\wonenorm{Z^0_{t_k}(z)} \ &\le \  (1+ L^{1/2} T) \wonenorm{z} + \sqrt{2 \gamma} (T+ L^{-1/2})  \sup_{t \le T} \left|\int_0^t e^{\gamma s} d B_s  \right| \nonumber \\
\ &\le \  \frac{3}{2} \left( \wonenorm{z} + \sqrt{2 \gamma}  L^{-1/2} \sup_{t \le T} \left|\int_0^t e^{\gamma s} d B_s  \right| \right)  \;.  \label{eq:ukLa_free_moment}
\end{align}
where we used $L^{1/2} T \le 1/2$.  

\smallskip

The comparison argument upper bounds $\Delta_{t_i}  :=  \wonenorm{Z^u_{t_i}(z)-Z^0_{t_i}(z)}$ using the discrete Gr\"{o}nwall inequality in Lemma~\ref{lem:groenwall}.  By the triangle inequality, \begin{align}
\Delta_{t_{i+1}}  \ &\le \ \rn{1} + \rn{2} \qquad \text{where} \label{eq:delta_ukLa} \\
 \rn{1} \ &= \ \wonenorm{Z^u_{t_{i+1}}(z) -O_h(\xi_i) \circ \theta_h^{(A)}(Z_{t_i}^u(z)) } \;,  \quad \text{and} \nonumber \\
 \rn{2} \ &= \ 
\wonenorm{O_h(\xi_i) \circ \theta_h^{(A)}(Z_{t_i}^u(z)) - O_h(\xi_i) \circ \theta_h^{(A)}(Z_{t_i}^0(z)) } \;. \nonumber 
\end{align}
To upper bound $\rn{2}$, we use the definitions of $\theta_h^{(A)}$ and the OU-map \eqref{eq:OU} 
\begin{align}
\rn{2} \ &\le \ | X_{t_i}^u(z) - X_{t_i}^0(z)| + (L^{1/2} h) L^{-1/2}     | V_{t_i}^u(z) - V_{t_i}^0(z)| 
\nonumber  \\
& \quad +   e^{-\gamma h} L^{-1/2} | V_{t_i}^u(z) - V_{t_i}^0(z)| \nonumber \\
\ &\le \ (1+ L^{1/2} h) \Delta_{t_i} \;. \label{eq:rn2_ukLa}
\end{align}
To upper bound $\rn{1}$, we additionally use the definition of the ukLa transition step \eqref{ukLa},  $L^{1/2} h \le 1/2$, and  $|\mathbf{F}(x)| \le L |x|$ (which follows from Assumptions~\ref{B1}-\ref{B2}), \begin{align}
& \rn{1} \ \le \   \left| \frac{h^2}{2}  \Tilde{\mathbf{F}}_{t_i}^- + \frac{h^2}{6 \mathcal{U}_i} \left(  \Tilde{\mathbf{F}}_{t_i}^+ - \Tilde{\mathbf{F}}_{t_i}^- \right)  \right| + L^{-1/2} \left| h \tilde{\mathbf{F} }_{t_i}^-  + \frac{h}{2 \mathcal{U}_i} \left(  \Tilde{\mathbf{F}}_{t_i}^+ - \Tilde{\mathbf{F}}_{t_i}^- \right)  \right| \;, \nonumber \\   
& ~~ \overset{\ref{B2}}{\le}   \frac{L h^2}{2} |X^u_{t_i} | + \frac{L  h^2}{6} \left|h V_{t_i}^u + \frac{\mathcal{U}_i h^2}{2} \tilde{\mathbf{F} }_{t_i}^-  \right| + (L^{1/2} h) |X^u_{t_i}| + \frac{L^{1/2} h}{2} \left|h V_{t_i}^u + \frac{\mathcal{U}_i h^2}{2} \tilde{\mathbf{F} }_{t_i}^-  \right| \;,  \nonumber  \\
& ~~ \ \le \  \left(  \frac{(L^{1/2} h)^2}{2} + \frac{(L^{1/2} h)^4}{12} + L^{1/2} h + \frac{(L^{1/2} h)^3}{4} \right)  |X^u_{t_i}| \nonumber \\
& \qquad + \left( \frac{(L^{1/2} h)^3}{6} + \frac{(L^{1/2} h)^2}{2}  \right) L^{-1/2} |V^u_{t_i}| \;, \nonumber \\
& ~~ \ \le \ \frac{4}{3} L^{1/2} h \wonenorm{Z^u_{t_i}(z)} \ \le \ \frac{4}{3} L^{1/2} h \Delta_{t_i} +  \frac{4}{3} L^{1/2} h \wonenorm{Z^0_{t_i}(z)} \;. 
\label{eq:rn1_ukLa}
\end{align}
Inserting \eqref{eq:rn1_ukLa} and \eqref{eq:rn2_ukLa} back into \eqref{eq:delta_ukLa} yields \begin{align}
& \Delta_{t_{i+1}} \ \le \ \left( 1 +  \frac{7}{3} L^{1/2} h \right)  \Delta_{t_i} +  \frac{4}{3} L^{1/2} h \sup_{k:~t_{k} \le T}  \wonenorm{Z^0_{t_k}(z)}  \  \le \  \frac{4}{7} \, e^{7/6} \sup_{k:~t_{k} \le T}  \wonenorm{Z^0_{t_k}(z)} \nonumber 
\end{align} 
where in the last step we used the discrete Gr\"{o}nwall inequality from Lemma~\ref{lem:groenwall}. Therefore, almost surely \begin{align}
& \wonenorm{Z_{t_i}^u(z)} \ \le \ \Delta_{t_i} +\wonenorm{ Z^0_{t_i}(z)} \  \le \  \left(1 + \frac{4}{7} \, e^{7/6} \right)  \sup_{k:~t_{k} \le T}  \wonenorm{Z^0_{t_k}(z)} \nonumber \\
& \qquad \ \le \  \frac{9}{2}  \left( \wonenorm{z} + \sqrt{2 \gamma}  L^{-1/2} \sup_{t \le T} \left|\int_0^t e^{\gamma s} d B_s  \right| \right) \;. \label{ukLa_wonebound} 
\end{align}
Let $(B^1_t)_{t \ge 0}$ be the first component of the $d$-dimensional Brownian motion $(B_t)_{t \ge 0}$ and define the martingale $M_t := \int_0^t e^{\gamma s } d B_s^1$. By the Burkholder-Davis-Gundy inequality  w.r.t.~the martingale $M_t$, i.e., \begin{align}
\label{ieq:BDG}
E \left( \sup_{t \le T} |M_t|^{2 \ell} \right) \, &\le \,   C_{\ell} \left(\int_0^T e^{ \gamma s} ds \right)^{\ell} \, \le \, C_{\ell}  T^{\ell-1} \int_0^T e^{2 \gamma \ell s} ds  \, \le \, C_{\ell}  T^{\ell-1} \frac{e^{2 \ell \gamma T} - 1}{2 \ell \gamma}  
\end{align}
where $C_{\ell}$ is a constant that depends only on $\ell$.  Then \begin{align}
& E\left( \wnorm{Z^u_{t_k}(z)}^{2 \ell} \right)   \  \leq \ E\left( \wonenorm{Z^u_{t_k}(z)}^{2 \ell} \right)   \nonumber    \\
& \qquad \overset{\eqref{ukLa_wonebound}}{\leq} \   \left(\frac{9}{2} \right)^{2 \ell}  2^{2 \ell - 1} E\left( \wonenorm{z}^{2 \ell} +   \left( \sqrt{2 \gamma/L}  \sup_{t \le T} \left| \int_0^t e^{\gamma s} d B_s \right| \right)^{2 \ell} \right)  \nonumber \\
& \qquad \ \le \ \frac{9^{\ell}}{2} \wonenorm{z}^{2 \ell} + \frac{9^{\ell}}{2}  (2 \gamma L^{-1})^{\ell} d^{\ell} E \left( \sup_{t \le T} |M_t|^{2 \ell} \right)  \nonumber \\
& \qquad \overset{\eqref{ieq:BDG}}{\le}  \frac{18^{\ell}}{2} \wnorm{z}^{2 \ell} + \frac{9^{\ell}}{2}  (2 \gamma L^{-1})^{\ell} d^{\ell} C_{\ell}  T^{\ell-1} \frac{e^{2 \ell \gamma T} - 1}{2 \ell \gamma} \;.
\label{eq:ukLa_moment_bound}
\end{align}
as required.
\end{proof}

\begin{proof}[Proof of Lemma~\ref{lem:lipschitz_exact_flow}]
Fix $t>0$ such that $L t^2 \le 1/4$.  As shorthand, let \[
(x_s^{(1)},v_s^{(1)}):=(x_s(x,v),v_s(x,v)) ~~ \text{and} ~~ (x_s^{(2)},v_s^{(2)}):=(x_s(\tilde{x}, \tilde{v}),v_s(\tilde{x}, \tilde{v})) \;, ~~  s \ge 0 \;. 
\]
Using \eqref{exact}, we have
\begin{align}
  & \wnorm{\varphi_t(x,v) -  \varphi_t(\tilde{x},\tilde{v})}^2   \ = \     |x_t^{(1)} - x_t^{(2)}|^2 + L^{-1}  |v_t^{(1)} - v_t^{(2)}|^2   \nonumber \\
  & \qquad \ = \    \left| x - \tilde{x} + \int_0^t (v_s^{(1)} - v_s^{(2)}) ds \right|^2 + L^{-1} \, \left| v-\tilde{v} + \int_0^t ( \mathbf{F}(x_s^{(1)}) - \mathbf{F}(x_s^{(2)}) ) ds \right|^2  \nonumber \\
 & \qquad \ = \ \wnorm{(x,v) - (\tilde{x},\tilde{v})}^2 + \rn{1} + \rn{2} \nonumber
 \end{align}
 where we have introduced
 \begin{align}
 \rn{1} \ &:= \  2 \left\langle x - \tilde{x}, \int_0^t (v_s^{(1)} - v_s^{(2)}) ds \right\rangle + 2 L^{-1} \left\langle v - \tilde{v}, \int_0^t (\mathbf{F}(x_s^{(1)})-\mathbf{F}(x_s^{(2)})) ds \right\rangle  \;, \nonumber \\
 \rn{2} \ &:= \  \left| \int_0^t (v_s^{(1)} - v_s^{(2)}) ds \right|^2 + L^{-1} \left| \int_0^t (\mathbf{F}(x_s^{(1)})-\mathbf{F}(x_s^{(2)})) ds \right|^2 \;. \nonumber 
\end{align}
 By applying the Cauchy-Schwarz and Young's inequalities, we bound the term $\rn{1}$ as follows: \begin{align}
\rn{1}  &\overset{\ref{B2}}{\le}  2 L^{1/2} \left[  |x-\tilde{x}| \left( L^{-1/2}  \int_0^t |v_s^{(1)} - v_s^{(2)}| ds \right) +    \left(  \int_0^t |x_s^{(1)} - x_s^{(2)}| ds \right)  L^{-1/2} |v-\tilde{v}|  \right] \nonumber \\
\ &\le \ 2 L^{1/2} t \left( |x-\tilde{x}|^2 + L^{-1} |v-\tilde{v}|^2 \right) \nonumber \\
& \qquad + \frac{L^{1/2} t}{2} \left( \left( t^{-1} \int_0^t |x_s^{(1)} - x_s^{(2)}| ds \right)^2 + L^{-1} \left( t^{-1} \int_0^t |v_s^{(1)} - v_s^{(2)}| ds \right)^2 \right)   \nonumber \\
\ &\le \ 2 L^{1/2} t \wnorm{(x,v) - (\tilde{x},\tilde{v})}^2 + \frac{L^{1/2} t}{2} \left( t^{-1} \int_0^t \wnorm{\varphi_s(x,v) - \varphi_s(\tilde{x},\tilde{v})}^2  ds  \right)  \nonumber \\
\ &\le \ 2 L^{1/2} t \wnorm{(x,v) - (\tilde{x},\tilde{v})}^2 + \frac{L^{1/2} t}{2} \left( \sup_{0 \le s \le t} \wnorm{\varphi_s(x,v) - \varphi_s(\tilde{x},\tilde{v})}^2   \right)  \;. \label{LC:1}
\end{align}
For $\rn{2}$,  we similarly apply the Cauchy-Schwarz inequality and use the condition $L^{1/2} t \le 1/2$, 
\begin{align}
& \rn{2}  \overset{\ref{B2}}{\le} L t \left( \int_0^t \wnorm{\varphi_s(x,v) - \varphi_s(\tilde{x},\tilde{v})}^2 ds   \right) \overset{(L^{1/2} t \le 1/2)}{\le}   \frac{L^{1/2} t}{2} \, \sup_{0 \le s \le t} \wnorm{\varphi_s(x,v) - \varphi_s(\tilde{x},\tilde{v})}^2    \;. \label{LC:2}
\end{align}
Combining \eqref{LC:1} and \eqref{LC:2}, we get the following: \begin{align}
 & \sup_{0 \le s \le t} \wnorm{\varphi_s(x,v) - \varphi_s(\tilde{x},\tilde{v})}^2 \le \frac{1 + 2 L^{1/2} t}{1-L^{1/2} t} \wnorm{(x,v) - (\tilde{x},\tilde{v})}^2  \ \le \   ( 1 + 6 L^{1/2} t) \wnorm{(x,v) - (\tilde{x},\tilde{v})}^2  \nonumber 
 \end{align}
 where in the last step we used the elementary inequality $(1+2 \mathsf{x})/(1-\mathsf{x}) \le 1 + 6 \mathsf{x} $ valid for $\mathsf{x} \in [0,1/2]$.  This gives the required result.
\end{proof}

\begin{proof}[Proof of Lemma~\ref{lem:rRKN_local_mean_error}] 
Let $\mathbf{F} \equiv -\nabla U$, $\mathbf{H} \equiv -\nabla^2 U$,
and $(x_s,v_s):=(x_s(x,v),v_s(x,v))$.  By \eqref{exact} and \eqref{eq:rRKN2pt5}, note that \begin{align}
& \wnorm{E(\tilde{Q}_h,\tilde{V}_h) - (q_h,v_h)}^2 \ = \ \rn{1} + \rn{2} 
 \end{align}
where we have introduced
 \begin{align}
\rn{1} \  &:= \  \left| \frac{h}{3} \int_0^h \big( \mathbf{F}(x+s v + \frac{s^2}{2} \mathbf{F}(x)) - \mathbf{F}(x) \big) ds - \int_0^h \frac{(h-s)^2}{2} \mathbf{H}(x_s) \cdot v_s ds \right|^2  \;, \nonumber \\
\rn{2} \ &:= \ L^{-1}  \left| \int_0^h \big( \mathbf{F}(x+s v + \frac{s^2}{2} \mathbf{F}(x)) - \mathbf{F}(x) \big) ds - \int_0^h (h-s) \mathbf{H}(x_s) \cdot v_s ds \right|^2  \;. \nonumber 
 \end{align}
Integrating by parts and applying the Cauchy-Schwarz inequality yields \begin{align}
 \rn{1} \ &= \     \left| \frac{h}{3} \int\limits_0^h (h-s)  \mathbf{H}(x+s v + \frac{s^2}{2} \mathbf{F}(x)) \cdot (v + s \mathbf{F}(x) ) ds - \int\limits_0^h \frac{(h-s)^2}{2} \mathbf{H}(x_s) \cdot v_s ds \right|^2 \nonumber \\
 \ &\le \ \rn{1}_a + \rn{1}_b + \rn{1}_c + \rn{1}_d   \quad \text{where} \nonumber  \\ 
 \rn{1}_a \ &:= \  4 \left| \frac{h}{3} \int_0^h (h-s) s \mathbf{H}(x+s v + \frac{s^2}{2} \mathbf{F}(x)) \cdot \mathbf{F}(x)   ds  \right|^2 \nonumber \;, \\
 \rn{1}_b \ &:= \  4 \left| \frac{h}{3} \int_0^h (h-s) \big( \mathbf{H}(x+s v + \frac{s^2}{2} \mathbf{F}(x)) - \mathbf{H}(x) \big) \cdot v    ds  \right|^2 \nonumber \;, \\
 \rn{1}_c \ &:= \  4 \left|  \int_0^h \frac{(h-s)^2}{2} \big( \mathbf{H}(x_s) - \mathbf{H}(x) \big) \cdot v   ds  \right|^2 \nonumber \;, \\
 \rn{1}_d \ &:= \  4  \left| \int_0^h \frac{(h-s)^2}{2} \mathbf{H}(x_s) \cdot \big( v_s - v \big) ds \right|^2 \nonumber \;.\end{align}
By Assumptions~\ref{B1}-\ref{B2}, $\opnorm{\mathbf{H}(x)} \le L$ and $|\mathbf{F}(x)| \le L |x|$.  Therefore, by Cauchy-Schwarz inequality, \begin{align}
 \rn{1}_a \ &\le \ 4 \, \frac{h^2}{9} \int_0^h (h-s)^2 s^2 ds \, \int_0^h | \mathbf{H}(x + s v + \frac{s^2}{2} \mathbf{F}(x)) \cdot \mathbf{F}(x)|^2 ds \nonumber \\
 \ &\le \  \frac{4}{9} \frac{h^8}{30} L^4 |x|^2 \le \frac{1}{60} (L h^2)^4 |x|^2 \label{ME:1a}
\end{align}
Additionally invoking Assumption~\ref{B3} gives
\begin{align}
 \rn{1}_b \ &\le \ 4 \frac{h^2}{9} \int_0^h (h-s)^2 ds \int_0^h \left| \big( \mathbf{H}(x+s v + \frac{s^2}{2} \mathbf{F}(x)) - \mathbf{H}(x) \big) \cdot v  \right|^2 ds \nonumber \\
 &\le 4 \frac{h^2}{9} \frac{h^3}{3} L_H^2 |v|^2 \int_0^h \left| s v + \frac{s^2}{2}  \mathbf{F}(x) \right|^2 ds   \nonumber \\
&\le 4 \frac{h^5}{27}  L_H^2 |v|^2 \int_0^h (2 s^2 |v|^2 + s^4  |\mathbf{F}(x)|^2 ) ds  \nonumber \\ 
&\le 
\frac{4}{27}  (L_H h^3)^2   \left( \frac{h^2}{3} |v|^4 +  \frac{1}{10}  2 \, h \, |v|^2 \cdot (L h^2)^{3/2} \, L^{1/2} \, |x|^2 \right)   \nonumber \\ 
&\le 
\frac{1}{6}  (L_H h^3)^2   \left( h^2 |v|^4 +  \frac{1}{10} (L h^2)^3 L |x|^4 \right)   
\label{ME:1b}
\end{align}

\begin{align}
 \rn{1}_c \ &\le \  \int_0^h (h-s)^4 ds \int_0^h |\big( \mathbf{H}(x_s) - \mathbf{H}(x) \big) \cdot v  |^2 ds \le \frac{h^5}{5} L_H^2 |v|^2 \int_0^h \left|\int_0^s v_r dr \right|^2 ds \nonumber \\
&\le \frac{h^5}{5} L_H^2 |v|^2 \int_0^h s \int_0^s |v_r|^2 dr ds   \le \frac{h^5}{5} L_H^2 |v|^2 \sup_{0 \le r \le h} |v_r|^2 \int_0^h s^2 ds  \nonumber \\
&\overset{\eqref{apriori:2a}}{\le} \frac{1}{15} (L_H h^3)^2  (4 h^2 |v|^4 + L h^2 |v|^2 |x|^2) \nonumber \\
&\le \frac{1}{2} (L_H h^3)^2  (h^2 |v|^4 + (L h^2)  L |x|^4)  \label{ME:1c}
\end{align}

\begin{align}
 \rn{1}_d \ &\le \   \int_0^h (h-s)^4 ds \int_0^h | \mathbf{H}(x_s) \cdot \big( v_s - v \big)  |^2 ds  \le  L^2 \frac{h^5}{5}  \int_0^h |  v_s - v   |^2 ds \nonumber \\
\ &\le \ L^2 \frac{h^5}{5}  \int_0^h \left| \int_0^s \mathbf{F}(x_r) dr \right|^2 ds  \le L^4 \frac{h^5}{5} \int_0^h s \int_0^s \left|x+\int_0^r v_{u} du \right|^2 dr ds \nonumber \\
\ &\le \ L^4 \frac{h^5}{5} \int_0^h s \int_0^s \left( 2|x|^2 + 2 \left| \int_0^r v_u du \right|^2 \right) dr ds  \nonumber \\ 
\ &\le \  L^4 \frac{h^5}{5} \int_0^h s \int_0^s \left( 2|x|^2 + 2 r \int_0^r |v_u|^2 du \right) dr ds \nonumber \\
\ &\le \ L^4 \frac{h^5}{5} \int_0^h \left( 2 s^2 |x|^2 + \frac{2}{3} s^4 \sup_{0 \le r \le h} |v_r|^2 \right) ds  \nonumber \\
\ & \overset{\eqref{apriori:2a}}{\le} \ \frac{1}{5}    (L h^2)^4 \left( \frac{2}{3}  |x|^2 + \frac{2}{15} h^2 (4 |v|^2 + L |x|^2) \right) \nonumber \\ 
\ &\le \ \frac{8}{75}  (L h^2)^4 h^2 |v|^2 + (L h^2)^4  \left( \frac{2}{15}  + \frac{2}{75} (L h^2) \right)  |x|^2    \label{ME:1d}
\end{align}

Similarly, we obtain \begin{align}
 \rn{2} \ &= \   L^{-1} \left|  \int\limits_0^h (h-s)  \mathbf{H}(x+s v + \frac{s^2}{2} \mathbf{F}(x)) \cdot (v + s \mathbf{F}(x) ) ds - \int\limits_0^h (h-s) \mathbf{H}(x_s) \cdot v_s ds \right|^2 \nonumber \\
 \ &\le \ \rn{2}_a + \rn{2}_b  \quad \text{where} \nonumber  \\ 
 \rn{2}_a \ &:= \ 2 \, L^{-1} \left|  \int_0^h (h-s) ( \mathbf{H}(x+s v + \frac{s^2}{2} \mathbf{F}(x)) - \mathbf{H}(x_s) ) \cdot (v + s \mathbf{F}(x) ) ds  \right|^2 \nonumber \;, \\
 \rn{2}_b \ &:= \ 2 \, L^{-1} \left|  \int_0^h (h-s) \mathbf{H}(x_s) \cdot \big( v_s - (v + s \mathbf{F}(x) ) \big) ds \right|^2 \nonumber \;.
\end{align}

By Assumptions~\ref{B1}-\ref{B3},
\begin{align}
 \rn{2}_a \ &\le \  2 \, L^{-1} \int_0^h (h-s)^2 ds \int_0^h |   ( \mathbf{H}(x+s v + \frac{s^2}{2} \mathbf{F}(x)) - \mathbf{H}(x_s) ) \cdot (v + s \mathbf{F}(x) ) |^2 ds \nonumber \\
&\le 2 L^{-1} \frac{h^3}{3} L_H^2 \int_0^h \left| \int_0^s \frac{(s-r)^2}{2} \mathbf{H}(x_r) \cdot v_r dr \right|^2 |v + s \mathbf{F}(x)|^2 ds \nonumber \\
&\le 2 L^{-1} \frac{h^3}{3} L_H^2 \int_0^h \left( \int_0^s \frac{(s-r)^4}{4} dr \right) \left(  \int_0^s |\mathbf{H}(x_r) \cdot v_r|^2 dr \right)  |v + s \mathbf{F}(x)|^2 ds \nonumber \\
&\le \frac{2}{3} L L_H^2 h^3 \sup_{0 \le r \le h} |v_r|^2 \int_0^h \frac{s^6}{20} (2 |v|^2 + 2 s^2 |\mathbf{F}(x)|^2 ) ds  \nonumber \\ 
&\le \frac{2}{3} L L_H^2 h^3 \sup_{0 \le r \le h} |v_r|^2 \int_0^h \frac{s^6}{20} (2 |v|^2 + 2 s^2 L^2 |x|^2 ) ds  \nonumber \\
&\le  \frac{2}{3} L L_H^2 h^3 \sup_{0 \le r \le h} |v_r|^2 \left( \frac{h^7}{70} |v|^2 +   \frac{h^9}{90} L^2 |x|^2 \right) \nonumber \\
&\overset{\eqref{apriori:2a}}{\le} \frac{2}{3} L L_H^2 h^3 \left( 4 |v|^2 + L |x|^2 \right) \left( \frac{h^7}{70} |v|^2 +   \frac{h^9}{90} L^2 |x|^2 \right) \nonumber \\
&\le \frac{1}{17} \left( (L_H h^3)^2 (L h^2) h^2 |v|^4 + (L_H h^3)^2 (L h^2) L|x|^4 \right) \;, \label{ME:2a}
\end{align}
where in the last step we used that $L h^2 \le 1/4$. \begin{align}
 \rn{2}_b \ &\le \  2 \, L^{-1} \int_0^h (h-s)^2 ds \int_0^h |  \ \mathbf{H}(x_s) \cdot \big( v_s - (v + s \mathbf{F}(x) ) \big) |^2 ds  \nonumber \\ 
 &\le \ \frac{2}{3} L h^3 \int_0^h | \int_0^s (s-r) \mathbf{H}(x_r) \cdot v_r dr |^2 ds  \nonumber \\
 \ &\le \   \frac{2}{3} L h^3 \int_0^h  \left( \int_0^s (s-r)^2 dr \right)  \left( \int_0^s |\mathbf{H}(x_r) \cdot v_r  |^2 dr \right) ds \nonumber \\
 &\le \ \frac{2}{3} L^3  h^3 \sup_{0 \le r \le h} |v_r|^2 \int_0^h \frac{s^4}{3} ds  \nonumber \\
  &\overset{\eqref{apriori:2a}}{\le} \frac{2}{45} (L h^2)^3   ( 4 h^2 |v|^2 + L h^2 |x|^2 ) \;. 
 \label{ME:2b}
\end{align}

Combining \eqref{ME:1a} - \eqref{ME:1d}, \eqref{ME:2a}, and  \eqref{ME:2b} yields, \begin{equation*}
\begin{aligned}
& \wnorm{E(\tilde{Q}_h,\tilde{V}_h) - (q_h,v_h)}^2  \\
& \qquad \ \le \  \frac{2}{9} \left(  ( L h^2 )^4 |x|^2 + ( L h^2 )^3 h^2  |v|^2 + 3 ( L_H h^3 )^2 (L h^2) L |x|^4 + 4 (L_H h^3)^2 h^2 |v|^4 \right) 
\end{aligned}
\end{equation*}
as required.
\end{proof}

\begin{proof}[Proof of Lemma~\ref{lem:rRKN_local_L2_error}]  Let $\mathbf{F} \equiv -\nabla U$, $\mathbf{H} \equiv -\nabla^2 U$,
 $\mathcal{U} \sim \text{Triangular}(0,1)$, and set $(x_s,v_s):=(x_s(x,v),v_s(x,v))$.  As detailed below, a stronger almost-sure result can be proven by using $\mathcal{U} \le 1$.
 
\smallskip

By \eqref{exact} and \eqref{eq:rRKN2pt5}, \begin{align}
& \wnorm{(\tilde{Q}_h,\tilde{V}_h) - (q_h,v_h)}^2 \ = \ \rn{1} + \rn{2} \quad \text{where} \nonumber \\
\rn{1} &:= \left| \frac{h^2}{6 \mathcal{U}} \big( \mathbf{F}(x+\mathcal{U} h v + \frac{(\mathcal{U} h)^2}{2} \mathbf{F}(x)) - \mathbf{F}(x) \big)  - \int_0^h \frac{(h-s)^2}{2} \mathbf{H}(x_s) \cdot v_s ds \right|^2 \;,  \nonumber \\
\rn{2} &:= L^{-1}  \left| \frac{h}{2 \mathcal{U}} \big( \mathbf{F}(x+\mathcal{U} h v + \frac{(\mathcal{U} h)^2}{2} \mathbf{F}(x)) - \mathbf{F}(x) \big) - \int_0^h (h-s) \mathbf{H}(x_s) \cdot v_s ds \right|^2  \;.  \nonumber 
\end{align}
By the fundamental theorem of calculus for line integrals,  \begin{align}
 \rn{1} \ &= \  \left| \frac{h^2}{6} \int\limits_0^{h} \mathbf{H}(x+ \mathcal{U} s v + \frac{(\mathcal{U} s)^2}{2} \mathbf{F}(x)) \cdot (v + \mathcal{U} s \mathbf{F}(x)) ds  - \int\limits_0^h \frac{(h-s)^2}{2} \mathbf{H}(x_s) \cdot v_s ds \right|^2 \nonumber \\
 \ &\le \ \rn{1}_a + \rn{1}_b \quad \text{where} \nonumber  \\ 
 \rn{1}_a \ &:= \  2  \left| \frac{h^2}{6} \int_0^{h} \mathbf{H}(x+ \mathcal{U} s v + \frac{(\mathcal{U} s)^2}{2} \mathbf{F}(x)) \cdot (v + \mathcal{U} s \mathbf{F}(x)) ds \right|^2   \nonumber \;, \\
 \rn{1}_b \ &:= \  2  \left|  \int_0^h \frac{(h-s)^2}{2} \mathbf{H}(x_s) \cdot v_s ds \right|^2 \nonumber \;.
 \end{align}
Similarly, we obtain  \begin{align}
 \rn{2} \ &= \  L^{-1}  \left| \frac{h}{2} \int\limits_0^h \big( \mathbf{H}(x+\mathcal{U} s v + \frac{(\mathcal{U} s)^2}{2} \mathbf{F}(x)) \cdot (v + \mathcal{U} s \mathbf{F}(x) )  \big) ds - \int\limits_0^h (h-s) \mathbf{H}(x_s) \cdot v_s ds \right|^2  \nonumber \\
 \ &\le \ \rn{2}_a + \rn{2}_b + \rn{2}_c + \rn{2}_d \quad \text{where} \nonumber  \\ 
 \rn{2}_a \ &:= \ 4 L^{-1} \left| \frac{h}{2} \int_0^h s \,  \mathcal{U}  \,  \mathbf{H}(x+\mathcal{U} s v + \frac{(\mathcal{U} s)^2}{2} \mathbf{F}(x)) \cdot \mathbf{F}(x)  ds  \right|^2 \nonumber \;, \\
 \rn{2}_b \ &:= \   4 L^{-1}  \left| \frac{h}{2} \int_0^h \big( \mathbf{H}(x+\mathcal{U} s v + \frac{(\mathcal{U} s)^2}{2} \mathbf{F}(x)) - \mathbf{H}(x) \big) \cdot v    ds  \right|^2 \nonumber \;, \\
 \rn{2}_c \ &:= \ 4 L^{-1}  \left|  \int_0^h (h-s) \big( \mathbf{H}(x_s) - \mathbf{H}(x) \big) \cdot v_s ds \right|^2 \nonumber \;, \\
 \rn{2}_d \ &:= \ 4 L^{-1}  \left|  \int_0^h (h-s)  \mathbf{H}(x)  \cdot (v_s - v)  ds \right|^2   \nonumber \;.\end{align}
By Assumption~\ref{B1}-\ref{B2}, $\opnorm{\mathbf{H}} \le L$ and $|\mathbf{F}(x)| \le L |x|$.  Therefore, by Cauchy-Schwarz inequality, we obtain the bounds \begin{align}
 \rn{1}_a \ &\le \ \frac{h^5}{18} \int_0^h \left| \mathbf{H}(x+ \mathcal{U} s v + \frac{(\mathcal{U} s)^2}{2} \mathbf{F}(x)) \cdot (v + \mathcal{U} s \mathbf{F}(x)) \right|^2 ds  \nonumber \\
&\le \frac{L^2 h^5}{18} \int_0^h | v + \mathcal{U} s \mathbf{F}(x) |^2 ds  \le \frac{L^2 h^5}{18} \int_0^h ( 2 | v |^2 + 2 s^2 | \mathbf{F}(x) |^2 ) ds   \nonumber \\ 
&\le \frac{L^2 h^6}{18}  \left( 2 | v |^2 + \frac{2}{3}  L^2 h^2 |x|^2 \right)   \le \frac{(L h^2)^3}{18}  \left( 2 L^{-1} | v |^2 + \frac{1}{6}   |x|^2 \right)   \label{MSE:1a}
\end{align}

\begin{align}
 \rn{1}_b \ &\le \ 2 \int_0^h \frac{(h-s)^4}{4} ds \int_0^h | \mathbf{H}(x_s) \cdot v_s |^2 ds \ \le \  \frac{L^2 h^6}{10} \sup_{0 \le s \le h} |v_s|^2 \nonumber \\
 \ &\overset{\eqref{apriori:2a}}{\le} \ 
 \frac{(L h^2)^3}{10}   (4 L^{-1} |v|^2 +  |x|^2) 
 \label{MSE:1b}
\end{align}

\begin{align}
 \rn{2}_a \ &\le \  L^{-1} h^2 \int_0^h s^2 ds  \int_0^h | \mathbf{H}(x+\mathcal{U} s v + \frac{(\mathcal{U} s)^2}{2} \mathbf{F}(x)) \cdot \mathbf{F}(x) |^2 ds \nonumber \\
 \ &\le \ \frac{(L h^2)^3}{3} |x|^2 \label{MSE:2a}
\end{align}

\begin{align}
 \rn{2}_b \ &\le \ L^{-1} h^3 \int_0^h \left|\left( \mathbf{H}(x+\mathcal{U} s v + \frac{(\mathcal{U} s)^2}{2} \mathbf{F}(x)) - \mathbf{H}(x) \right) \cdot v \right|^2 ds  \nonumber \\
 &\le L^{-1} h^3 L_H^2 |v|^2 \int_0^h \left| \mathcal{U} s v + \frac{(\mathcal{U} s)^2}{2} \mathbf{F}(x) \right|^2 ds   \nonumber \\
&\le L^{-1} h^3 L_H^2 |v|^2 \int_0^h \left( 2 s^2 |v|^2 + \frac{s^4}{2} |\mathbf{F}(x) |^2 \right) ds \nonumber \\
&\le L^{-1} h^3 L_H^2 |v|^2  \left( \frac{2}{3} h^3 |v|^2 + \frac{h^5}{10} L^2 |x|^2 \right)  \nonumber \\
&\le L^{-1}  L_H^2 h^6  \left( \frac{2}{3}  |v|^4 + 2 \frac{h^2}{20} L^2 |v|^2 |x|^2 \right) \nonumber \\
&\le   (L_H h^3)^2   \left( L^{-1} |v|^4 +  \frac{3}{400} (L h^2)^2 L |x|^4 \right)   \label{MSE:2b}
\end{align}

\begin{align}
 \rn{2}_c \ &\le \ 4 L^{-1} \int_0^h (h-s)^2 ds \int_0^h | ( \mathbf{H}(x_s) - \mathbf{H}(x) ) \cdot v_s |^2 ds \nonumber \\
\ & \le \   4 L^{-1} \frac{h^3}{3} L_H^2 \int_0^h |x_s - x|^2 |v_s|^2 ds  \nonumber \\
 \ & \le \   4 L^{-1} \frac{h^3}{3} L_H^2 \int_0^h \left|\int_0^s v_r dr \right|^2 |v_s|^2 ds \le 4 L^{-1} \frac{h^3}{3} L_H^2 \left(\sup_{0 \le r \le h} |v_r|^2 \right)^2 \int_0^h s^2 ds \nonumber \\
\  &   \overset{\eqref{apriori:2a}}{\le}  \frac{4}{9}  L^{-1} (L_H h^3)^2 ( 4 |v|^2 + L |x|^2)^2  
\ \le \    \frac{4}{9}   (L_H h^3)^2 ( 32  L^{-1} |v|^4 + 2 L |x|^4)  \label{MSE:2c}
\end{align}

\begin{align}
 \rn{2}_d \ &\le \  4 L^{-1} \int_0^h (h-s)^2 ds \int_0^h | \mathbf{H}(x)  \cdot (v_s - v)  |^2 ds  \le 4 L  \frac{h^3}{3}   \int_0^h |v_s - v |^2 ds \nonumber \\
\ &\le \ 4 L \frac{h^3}{3}  \int_0^h \left| \int_0^s \mathbf{F}(x_r) dr \right|^2 ds  \le 4 L^3 \frac{h^3}{3} \int_0^h s \int_0^s \left|x+\int_0^r v_{u} du \right|^2 dr ds \nonumber \\
\ &\le \ 4 L^3 \frac{h^3}{3} \int_0^h s \int_0^s \left( 2|x|^2 + 2 \left| \int_0^r v_u du \right|^2 \right) dr ds   \nonumber  \\ 
\ &\le  \ 4 L^3 \frac{h^3}{3} \int_0^h s \int_0^s \left( 2|x|^2 + 2 r \int_0^r |v_u|^2 du \right) dr ds \nonumber \\
\ &\le \ 4 L^3 \frac{h^3}{3} \int_0^h \left( 2 s^2 |x|^2 + \frac{2}{3} s^4 \sup_{0 \le r \le h} |v_r|^2 \right) ds  \nonumber  \\
\ &\le \ \frac{4}{3}    (L h^2)^3  \left( \frac{2}{3}  |x|^2 + \frac{2}{15} h^2 \sup_{0 \le r \le h} |v_r|^2 \right) \nonumber \\
\ & \overset{\eqref{apriori:2a}}{\le} \ \frac{4}{3}    (L h^2)^3  \left( \frac{2}{3}  |x|^2 + \frac{2}{15} h^2 (4 |v|^2 + L |x|^2) \right) \nonumber  \\
\ &\le \ \frac{32}{45}  (L h^2)^3 h^2 |v|^2 + (L h^2)^3  \left( \frac{8}{9} + \frac{8}{45} L h^2 \right) |x|^2 \;.  \label{MSE:2d}
\end{align}

Combining \eqref{MSE:1a}-\eqref{MSE:1b} and \eqref{MSE:2a}-\eqref{MSE:2d} and using the condition $L h^2 \le 1/4$, we get almost surely \begin{align}
& \wnorm{(\tilde{Q}_h,\tilde{V}_h) - (q_h,v_h)}^2 \nonumber  \\
& \quad \ \le \ \frac{7}{5} \left( (L h^2)^3  |x|^2 + (L h^2)^3 ( h^2 + L^{-1} ) |v|^2 + (L_H h^3)^2  L |x|^4 + 12 (L_H h^3)^2  L^{-1} |v|^4 \right)  \nonumber \\
& \quad \ \le \ \frac{7}{4} \left[ (L^{1/2} h)^6 \left(  |x|^2 +  L^{-1} |v|^2 \right) + (L_H h^3)^2  L \left(|x|^4 + 10  L^{-2} |v|^4\right) \right]  \nonumber
\end{align}
as required.  
\end{proof}

\printbibliography

\end{document}